\newtheorem{theorem}{Theorem}[section]
\newtheorem{proposition}[theorem]{Proposition}
\newtheorem{corollary}[theorem]{Corollary}
\newtheorem{lemma}[theorem]{Lemma}
\theoremstyle{definition}
\newtheorem{definition}[theorem]{Definition}
\newtheorem{example}[theorem]{Example}
\newtheorem{remark}[theorem]{Remark}
\DeclareMathOperator{\B}{B}
\DeclareMathOperator{\spec}{Spec}
\DeclareMathOperator{\Vect}{Vect}
\DeclareMathOperator{\Aut}{Aut}
\DeclareMathOperator{\BAut}{\mathbf{Aut}}
\DeclareMathOperator{\EF}{EFVect}
\DeclareMathOperator{\Gr}{Gr}
\DeclareMathOperator{\Bd}{Bd}
\DeclareMathOperator{\bd}{bd}
\DeclareMathOperator{\Ob}{Ob}
\DeclareMathOperator{\R}{R}
\begin{document}

\title{The Nori fundamental gerbe of tame stacks}

\author{Indranil Biswas}

\address{School of Mathematics, Tata Institute of Fundamental Research,
Homi Bhabha Road, Bombay 400005, India}

\email{indranil@math.tifr.res.in}

\author{Niels Borne}

\address{Universit\'e Lille 1, Cit\'e scientifique
U.M.R. CNRS 8524, U.F.R. de Math\'ematiques
59 655 Villeneuve d'Ascq C\'edex, France}

\email{Niels.Borne@math.univ-lille1.fr}

\subjclass[2010]{14D23, 14F35, 53C08}

\keywords{Nori fundamental gerbe, stacks, moduli space, inertia, residual gerbe.}
 
\begin{abstract}
Given an algebraic stack, we compare its Nori fundamental group with that of its
coarse moduli space. We also study conditions under which the stack can be
uniformized by an algebraic space.
\end{abstract}

\maketitle

\section{Introduction}

The aim here is to show that the results of \cite{noohi:fund_group_alg_stack} 
concerning the \'etale fundamental group of algebraic stacks also hold for the Nori 
fundamental group.

Let us start by recalling Noohi's approach. Given a connected algebraic stack 
$\mathfrak X$, and a geometric point $x\,:\,\spec \Omega \,\longrightarrow\, 
\mathfrak X$, Noohi generalizes the definition of Grothendieck's \'etale 
fundamental group to get a profinite group $\pi_1(\mathfrak X,x)$ which classifies 
finite \'etale \emph{representable} morphisms (coverings) to $\mathfrak X$. He then 
highlights a new feature specific to the stacky situation: for each geometric point 
$x$, there is a morphism $\omega_x\,:\,\Aut x \,\longrightarrow\, \pi_1(\mathfrak 
X,x)$.

Noohi first studies the situation where $\mathfrak X$ admits a moduli space $Y$, 
and proceeds to show that if $N$ is the closed normal subgroup of $\pi_1(\mathfrak 
X,x)$ generated by 
the images of $\omega_x$, for $x$ varying in all geometric points, then 
$$\frac{\pi_1(\mathfrak X,x)}{N} \,\simeq\, \pi_1(Y,y)\; .$$ Noohi turns next to 
the issue of uniformizing algebraic stacks: he defines a Noetherian algebraic 
stack $\mathfrak X$ as uniformizable if it admits a covering, in the above sense, 
that is an algebraic space. His main result is that this happens precisely when 
$\mathfrak X$ is a Deligne--Mumford stack and for any geometric point $x$, the 
morphism $\omega_x$ is injective.

For our purpose, it turns out to be more convenient to use the Nori 
fundamental gerbe defined in \cite{bv:nori_gerbe}. For simplicity, we will assume in the 
rest of this introduction that $\mathfrak X$ is a proper, geometrically connected and 
reduced algebraic stack over a field $k$, so that a fundamental gerbe $$\mathfrak X
\,\longrightarrow\, 
\pi_{\mathfrak X/k}$$ exists, and has a Tannakian interpretation. An essential role is 
played by residual gerbes at closed points $x$ of $\mathfrak X$, denoted by $\mathcal 
G_x\,\longrightarrow\,\mathfrak X$.

Let us first describe the content of Section \ref{sec:generators}. We assume that 
$\mathfrak X$ admits a good moduli space $Y$ in the sense of Alper \cite{alper:good_moduli} (this is the case, for instance when $\mathfrak X$ is tame 
as defined in \cite{aov:tame_stacks}) and proceed to compare the fundamental gerbes $\pi_{\mathfrak 
X/k}$ and $\pi_{Y/k}$. We use a result of Alper 
relating vector bundles on $ \mathfrak 
X$ and on $Y$ to show that the morphism $\pi_{\mathfrak X/k}\,\longrightarrow\,
\pi_{Y/k}$ is universal with respect to the property that all composites
$$
\mathcal G_x\,\longrightarrow\, \mathfrak X\,\longrightarrow\, \pi_{\mathfrak X/k}\,
\longrightarrow\, \pi_{Y/k}
$$
are trivial, in a natural sense (see Corollary 
\ref{cor:gen_fg_moduli}). Using Alper's theorem again, we also prove (Proposition 
\ref{cor:tors-mod-space}) that a given $G$--torsor $\mathfrak X'\,\longrightarrow\,
\mathfrak X$ is the 
pullback of a $G$--torsor on $Y$ via the morphism $\mathfrak X\,\longrightarrow\,
Y$ if and only if it is isovariant.

In Section \ref{sec:unif}, we work with stacks with finite inertia (again, tame stacks 
are examples). We say that such a stack $\mathfrak X$ over a field $k$ is 
Nori-uniformizable if there exists a finite $G$--torsor $X'
\,\longrightarrow\, \mathfrak X$, where the total space 
$X'$ is an algebraic space. 
Our main technical result (Proposition \ref{unif_res}) states that $\mathfrak X$ is 
Nori-uniformizable if and only if all composite morphisms
$$\mathcal G_x\,\longrightarrow\,\mathfrak X\,\longrightarrow\, \pi_{\mathfrak X/k}$$
are representable. This is a ``continuous'' version of Noohi's main theorem, 
and this formulation also demonstrates how convenient it is to use Nori's fundamental gerbe instead of Nori's fundamental group scheme. Our
main result, Theorem \ref{char_unif}, is a Tannakian translation of
Proposition \ref{unif_res} that gives a characterization of Nori-uniformizability in
terms of restriction of essentially vector bundles on $\mathfrak X$ along all
morphisms $\mathcal G_x\,\longrightarrow\, \mathfrak X$. It states, morally, that
$\mathfrak X$ is
Nori-uniformizable if and only if for all $x$, any representation of $\mathcal G_x$
comes from an essentially finite vector bundle on $\mathfrak X$. We hope to be able to
apply this result to certain orbifolds (called stack of roots) to relate
Nori-uniformizability to parabolic bundles.

We conclude this introduction by pointing out that no properness assumption is 
needed to prove Proposition \ref{unif_res}, while it is essential in our proofs of 
Corollary \ref{cor:gen_fg_moduli} and Proposition \ref{cor:tors-mod-space}. Since 
Noohi's counterparts hold for any algebraic stack, it is an interesting question if 
it is possible to remove this hypothesis, but we have no idea of a proof avoiding 
Tannaka duality at the moment.

\section{Preliminaries}

We work over a base field $k$, and denote $S\,=\,
\spec k$. We will mainly be interested in the case where the characteristic $p$ of $k$ is positive.

Concerning Nori fundamental gerbes, we use the terminology introduced in \cite{bv:nori_gerbe}. Let us sum up the conventions used and refer to \cite{bv:nori_gerbe} for more information.

A Tannakian gerbe (see \cite[\S 3]{bv:nori_gerbe}) over $S$ is a fpqc gerbe with affine diagonal and an affine chart. For such gerbes, Tannaka duality holds: our reference is \cite[III \S 3]{rivano:cat_tan} in the corrected formulation given in \cite{del:cat_tak}. The Nori fundamental gerbe is a Tannakian gerbe.

It is even an inverse limit of finite gerbes. Recall from \cite[\S 
4]{bv:nori_gerbe} that a finite gerbe is a fppf gerbe with finite diagonal and a 
finite flat chart. By Artin's theorem, this is an algebraic stack.

Given an algebraic stack $\mathfrak X/S$, we say that $\mathfrak X$ is inflexible if any morphism to a finite stack factors through a gerbe (see \cite[Definition 5.3]{bv:nori_gerbe}). This condition is equivalent to the existence of a Nori fundamental gerbe $\pi_{\mathfrak{X}/S}$, i.e., a morphism to a profinite gerbe $\mathfrak X \,\longrightarrow\, \pi_{\mathfrak{X}/S}$ which is universal. It is realized for instance when $\mathfrak X/S$ is of finite type, geometrically connected and geometrically reduced.

We now turn to the Tannakian interpretation of the Nori fundamental gerbe. Recall 
that according to Nori a vector bundle $\mathcal E$ is called finite is there
is a non trivial relation between its tensor powers (see \cite{nor_phd}). Formally, 
this means that there are two distinct polynomials $f\, ,g\,\in\, \mathbb N[t]$ such that 
$f(\mathcal E)\,\simeq\, g(\mathcal E)$, when we replace $+$ by $\oplus$ and $\cdot$ 
by $\otimes$ when we evaluate a polynomial at a vector bundle. We adopt the 
definition of an essentially finite vector bundle given in \cite{bv:nori_gerbe}, 
so essentially finite vector bundles are precisely the kernels of morphisms 
between two finite vector bundles.

We will say that an algebraic stack $\mathfrak X/S$ is pseudo-proper if for any vector bundle $\mathcal E$ on $\mathfrak X$, the space of global sections $\Gamma(X,\mathcal E)$ is finite dimensional over $k$ (see \cite[Definition 7.1]{bv:nori_gerbe} for the precise definition). If $\mathfrak X/S$ is inflexible and pseudo-proper, pull-back along $\mathfrak X \,\longrightarrow\, \pi_{\mathfrak{X}/S}$ identifies representations of $\pi_{\mathfrak{X}/S}$ with essentially finite vector bundles on $\mathfrak X$ (\cite[Theorem 7.9]{bv:nori_gerbe}), thus we get in this situation a Tannakian interpretation of the Nori fundamental gerbe.

Our main reference for stacks is the \emph{Stacks Project} \cite{stacks-project}. If $x$ is a 
point of an algebraic stack $\mathfrak X$, we will write $\mathcal G_x$ for the residual gerbe at $x$, see
\cite[Tag 06ML]{stacks-project}. This is a reduced stack with a single 
point, and there is a canonical monomorphism $\mathcal G_x\,\longrightarrow\,
 \mathfrak X$ mapping this unique point to $x$. By closed point of a stack, we mean as usual a geometric point with closed image.

\section{Generators of the Nori fundamental gerbe}
\label{sec:generators}

In this section, we will deal with algebraic stacks $\mathfrak X$ with a \emph{good 
moduli space} $$\varphi\,:\,\mathfrak X \,\longrightarrow \,Y$$ in the sense of \cite{alper:good_moduli}.

\subsection{Characterization of essentially finite vector bundles coming from the moduli space}

\begin{proposition}
	\label{prop:ef_good_mod}
Assume $\mathfrak X$ is a locally Noetherian algebraic stack with good moduli
space $$\varphi\,:\,\mathfrak X \,\longrightarrow\, Y\, .$$ The functors $\varphi^*$ and
$\varphi_*$ induce an
equivalence of categories between the category of essentially finite vector bundles $\mathcal F$ on $Y$ and the full subcategory of essentially finite vector bundles $\mathcal E$ on $\mathfrak X$ satisfying the condition that for any closed point $x$, the restriction $\mathcal E_{|\mathcal G_x}$ is trivial.
\end{proposition}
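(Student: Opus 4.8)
The plan is to reduce the statement to Alper's theorem on the adjunction $(\varphi^*, \varphi_*)$ for vector bundles on a stack with good moduli space, then check that the essential-finiteness and triviality conditions match up under this correspondence. First I would recall that for a good moduli space $\varphi\colon\mathfrak X\to Y$ with $\mathfrak X$ locally Noetherian, Alper's results give that $\varphi_*$ is exact on quasi-coherent sheaves, that $\varphi^*$ is fully faithful on quasi-coherent sheaves with the counit $\varphi^*\varphi_*\mathcal F \to \mathcal F$ an isomorphism, and—crucially—that $\varphi^*$ sends vector bundles on $Y$ to vector bundles on $\mathfrak X$ and that a vector bundle $\mathcal E$ on $\mathfrak X$ is in the essential image of $\varphi^*$ precisely when the unit $\mathcal E \to \varphi^*\varphi_*\mathcal E$ is an isomorphism (equivalently, when $\varphi_*\mathcal E$ is a vector bundle on $Y$ and the natural map is an iso). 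So the functors $\varphi^*,\varphi_*$ already give an equivalence between $\Vect(Y)$ and the full subcategory of $\Vect(\mathfrak X)$ of bundles pulled back from $Y$; the content of the proposition is to identify the latter subcategory, intersected with essentially finite bundles, with the bundles that are essentially finite and geometrically trivial on every residual gerbe.

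The next step is the local analysis at a closed point $x$. Since $\mathcal G_x$ is the residual gerbe, and (after base change to the residue field, or its algebraic closure) $\mathcal G_x$ is a gerbe banded by a linearly reductive group scheme when $\mathfrak X$ is tame or more generally has good moduli space, the restriction $\mathcal E_{|\mathcal G_x}$ corresponds to a representation of that automorphism group, and it is trivial exactly when that representation is trivial, i.e. $\mathcal E_{|\mathcal G_x}$ is a direct sum of copies of the structure sheaf. I would argue that $\mathcal E$ is pulled back from $Y$ if and only if $\mathcal E_{|\mathcal G_x}$ is trivial for all closed $x$: one direction is immediate since $\varphi\circ(\mathcal G_x\to\mathfrak X)$ factors through $\spec$ of the residue field at $\varphi(x)$, so a pulled-back bundle restricts trivially. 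For the converse, triviality on each $\mathcal G_x$ means the coherent sheaf $\mathcal E$ is ``fppf-locally on $Y$ pulled back'' in a way that lets one invoke Alper's criterion: concretely, one shows $\varphi_*\mathcal E$ is locally free and $\varphi^*\varphi_*\mathcal E\to\mathcal E$ is an isomorphism by checking this after completing at each point of $Y$, where $\mathfrak X$ becomes (a quotient by a linearly reductive group of) an affine scheme and the fiber computation reduces exactly to the triviality of the representation attached to $\mathcal G_x$. Here one uses that linearly reductive quotients commute with the relevant base changes and that $\Gamma$ of a trivial-on-the-gerbe bundle over such a local model is free of the expected rank.

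Finally I would reconcile essential finiteness with the equivalence: since $\varphi^*$ is exact (it is the pullback along a flat, indeed faithfully flat, morphism — a good moduli space morphism is flat in the tame/linearly reductive situation on the relevant categories, or at least $\varphi^*$ preserves kernels of maps of vector bundles because the issue is fppf-local) and monoidal, it sends finite bundles to finite bundles and kernels of maps of finite bundles to the same, so $\varphi^*\mathcal F$ is essentially finite whenever $\mathcal F$ is. Conversely, if $\mathcal E = \varphi^*\mathcal F$ is essentially finite on $\mathfrak X$, write $\mathcal E$ as the kernel of a map $u\colon\mathcal A\to\mathcal B$ of finite bundles on $\mathfrak X$; applying $\varphi_*$ (exact, monoidal on the pulled-back part, and sending finite to finite by the equivalence) exhibits $\mathcal F=\varphi_*\mathcal E$ as the kernel of $\varphi_*u$ between finite bundles on $Y$, hence essentially finite. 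I expect the main obstacle to be the converse direction of the local analysis—showing that triviality of $\mathcal E_{|\mathcal G_x}$ at every closed point forces $\mathcal E$ to descend to $Y$—since this requires controlling $\varphi_*\mathcal E$ and $\varphi^*\varphi_*\mathcal E$ integrally, not just at closed points, and one must rule out ``nilpotent'' or infinitesimal obstructions to descent that are invisible on the reduced residual gerbes; the resolution should come from Alper's cohomology and base change statements for good moduli spaces together with a Nakayama-type argument on the completed local rings of $Y$.
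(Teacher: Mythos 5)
Your overall architecture coincides with the paper's: establish the equivalence for plain vector bundles, then transport finiteness through the tensor‑compatible equivalence and essential finiteness through exactness of $\varphi_*$ (and of its inverse $\varphi^*$ on the relevant subcategories). The one place where you genuinely diverge --- and where your write-up is incomplete --- is the ``local analysis'' paragraph. The statement you set out to prove there, namely that $\varphi^*$ and $\varphi_*$ give an equivalence between $\Vect(Y)$ and the full subcategory of vector bundles $\mathcal E$ on $\mathfrak X$ such that $\mathcal E_{|\mathcal G_x}$ is trivial for every closed point $x$, is verbatim Theorem 10.3 of \cite{alper:good_moduli}, and the paper simply cites it. So the ``main obstacle'' you flag at the end (controlling $\varphi_*\mathcal E$ and the counit integrally via completion at points of $Y$ and a Nakayama‑type argument) is not something you need to resolve: it is exactly the content of Alper's theorem, and your sketch of it, while pointing in a plausible direction, is not a proof. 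With that citation substituted for your second paragraph, your first and third paragraphs reproduce the paper's argument: tensor compatibility of $\varphi^*$ and hence of $\varphi_*$ handles finite bundles, and exactness of $\varphi_*$ (good moduli space) together with exactness of the inverse equivalence handles kernels.

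One caveat on the bootstrapping step, which your proposal and the paper's proof elide in the same way: when you write an essentially finite $\mathcal E$ on $\mathfrak X$ that is trivial on all residual gerbes as $\ker(u\colon\mathcal A\to\mathcal B)$ with $\mathcal A,\mathcal B$ finite, nothing guarantees that $\mathcal A$ and $\mathcal B$ are themselves trivial on the residual gerbes, so ``$\varphi_*$ sends finite bundles to finite bundles by the equivalence'' does not apply to them directly; $\varphi_*\mathcal A$ and $\varphi_*\mathcal B$ are a priori only coherent. Since the published proof makes the same leap, this is not a defect of your proposal relative to the paper, but if you want a complete argument you should either arrange the presentation of $\mathcal E$ to lie in the subcategory of bundles trivial on all $\mathcal G_x$, or argue via the Tannakian description of that subcategory.
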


\begin{proof}
The fact that the same result holds for vector bundles is proved in \cite[Theorem 
10.3]{alper:good_moduli}. Since $\varphi^*$ is compatible with tensor products, so is the 
inverse equivalence $\varphi_*$, hence the equivalence holds for finite vector bundles. By 
definition of a good moduli space, the functor $\varphi_*$ is exact, and so is the inverse equivalence 
$\varphi^*$. Since an essentially finite vector bundle is by definition the kernel of a 
morphism between two finite vector bundles, the equivalence holds for essentially finite 
vector bundles as well.
\end{proof}

\begin{remark}
It is unclear if the statement holds for \emph{adequate} moduli spaces in the sense
of \cite{alper:adequate_moduli}. For vector bundles it is false according
to \cite[Example 5.6.1]{alper:adequate_moduli}.  
\end{remark}

\subsection{Fundamental gerbe of the moduli space}

We now use Tannaka duality to translate Proposition \ref{prop:ef_good_mod} in 
terms of fundamental gerbes. Morally, $\pi_{Y/S}$ is the quotient gerbe obtained 
from $\pi_{\mathfrak X/S}$ after dividing by the ``normal sub-gerbe generated by 
the images of $\mathcal G_x \,\longrightarrow\, \pi_{X/S}$''. We must be careful 
since the $\mathcal G_x$ are not necessarily defined over the base field $k$, but 
only over the extension $k(\varphi(x))$. The precise definitions are as follows.

\begin{definition}
Let $k'/k$ be a field extension, and $\mathcal G$ (respectively, $\mathcal G'$) a gerbe
over $S\,=\,\spec k$, (respectively, $S'\,=\,\spec k'$). A $S'\,\longrightarrow\,
S$--morphism $\mathcal G'\,\longrightarrow\, \mathcal G$ is \emph{trivial} if
there is a morphism $S'\,\longrightarrow\, \mathcal G$ (shown below in dotted arrow) 
\[
	\xymatrix{
	\mathcal G'\ar[r]\ar[d] &\mathcal G \ar[d]\\
	S'\ar@{.>}[ur] \ar[r]& S
	}
\]
making both triangles commute.
\end{definition}
If the gerbes are Tannakian, this means by duality that the pullback functor $$\Vect \mathcal G
\,\longrightarrow\, \Vect \mathcal G'$$ sends any object to a trivial one.

\begin{definition}
	Let $(k_i/k)_{i\in I}$ be a family of field extensions, and for each $i\,\in\, I$, we
are given a
$k_i/k$--morphism $$\alpha_i\,:\,\mathcal G_i
\,\longrightarrow\,
\mathcal G$$ from a $k_i$--gerbe $\mathcal G_i$ to a fixed $k$--gerbe $\mathcal G$. We say that a morphism of $k$--gerbes $\mathcal G\,\longrightarrow\,
\widetilde{\mathcal G}$ is a quotient by the ``normal sub-gerbe generated by the images of the $\alpha_i$'s'' if all composite morphisms $\mathcal G_i\,\longrightarrow\,
 \widetilde{\mathcal G}$ are trivial, and $\mathcal G \,\longrightarrow\,
\widetilde{\mathcal G}$ is universal for this property.
\end{definition}

It is clear that the quotient gerbe, if it exists, is unique. The existence follows, 
when all gerbes are Tannakian, from duality; indeed, it is enough to define 
$\widetilde{\mathcal G}$ as the Tannaka dual of the full subcategory of $\Vect \mathcal 
G$ generated by objects that are trivialized by all pullback functors $\alpha_i^*\,:\, \Vect 
\mathcal G \,\longrightarrow\,\Vect \mathcal G_i$.

\begin{remark}
	Even if the quotient makes sense, the ``normal sub-gerbe generated by the
images of the $\alpha_i$'s'' doesn't always exist, and even if it exists, it is not
uniquely defined (see \cite{milne_quot}).
\end{remark}

\begin{corollary}
\label{cor:gen_fg_moduli}
Let $\mathfrak X$ be a locally Noetherian algebraic stack with good moduli space
$\varphi\,:\,
\mathfrak X \,\longrightarrow\, Y$. Assume that both $\mathfrak X$ and $Y$ are inflexible (i.e.,
admit fundamental gerbes) and are pseudo-proper. Then the fundamental gerbe $\pi_{Y/S}$ is the quotient of $\pi_{\mathfrak X /S}$ by the normal sub-gerbe generated by the images of the morphisms $\mathcal G_x \,\longrightarrow\, \pi_{\mathfrak X /S}$.
\end{corollary}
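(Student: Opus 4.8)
The plan is to deduce Corollary \ref{cor:gen_fg_moduli} from Proposition \ref{prop:ef_good_mod} by purely Tannakian bookkeeping. Since both $\mathfrak X$ and $Y$ are inflexible and pseudo-proper, Theorem 7.9 of \cite{bv:nori_gerbe} identifies $\Vect\pi_{\mathfrak X/S}$ with the category $\EF(\mathfrak X)$ of essentially finite vector bundles on $\mathfrak X$, and likewise $\Vect\pi_{Y/S}$ with $\EF(Y)$, compatibly with the tensor structure; under these identifications the morphism $\pi_{\mathfrak X/S}\,\longrightarrow\,\pi_{Y/S}$ induced by $\varphi$ corresponds to the pullback functor $\varphi^*\,:\,\EF(Y)\,\longrightarrow\,\EF(\mathfrak X)$. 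By the discussion following the second Definition above, the quotient of $\pi_{\mathfrak X/S}$ by the normal sub-gerbe generated by the images of the $\mathcal G_x\,\longrightarrow\,\pi_{\mathfrak X/S}$ exists and is the Tannaka dual of the full Tannakian subcategory $\mathcal C\,\subseteq\,\EF(\mathfrak X)$ consisting of those $\mathcal E$ whose pullback to every $\mathcal G_x$ is trivial. So the statement reduces to showing that $\varphi^*$ induces an equivalence $\EF(Y)\,\xrightarrow{\ \sim\ }\,\mathcal C$.

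First I would note that $\mathcal C$ is exactly the full subcategory described in Proposition \ref{prop:ef_good_mod}: an object of $\EF(\mathfrak X)$ lies in $\mathcal C$ iff its restriction to $\mathcal G_x$ is trivial for every closed point $x$. The one subtlety to address is that the residual gerbe $\mathcal G_x$ in Proposition \ref{prop:ef_good_mod} is the residual gerbe of the closed point $x$ viewed inside the algebraic stack $\mathfrak X$, whereas in the Corollary we speak of morphisms $\mathcal G_x\,\longrightarrow\,\pi_{\mathfrak X/S}$; but these fit into the canonical factorization $\mathcal G_x\,\longrightarrow\,\mathfrak X\,\longrightarrow\,\pi_{\mathfrak X/S}$, and ``$\mathcal E_{|\mathcal G_x}$ is trivial'' for the essentially finite bundle $\mathcal E$ corresponding to a representation $V$ of $\pi_{\mathfrak X/S}$ is, by Tannaka duality applied to the (Tannakian) residual gerbe $\mathcal G_x$, precisely the condition that the composite $\mathcal G_x\,\longrightarrow\,\pi_{\mathfrak X/S}$ is trivial in the sense of the first Definition. (Here one uses that $\mathcal G_x$, being the residual gerbe at a geometric point with closed image, is a gerbe over $\spec k(\varphi(x))$ that is Tannakian, so that triviality of the morphism is equivalent to triviality of the pullback functor on vector bundles, as recorded after the first Definition.) Thus $\mathcal C$ literally coincides with the full subcategory appearing in Proposition \ref{prop:ef_good_mod}.

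Now Proposition \ref{prop:ef_good_mod} asserts exactly that $\varphi^*$ and $\varphi_*$ restrict to an equivalence between $\EF(Y)$ and this subcategory $\mathcal C$. It remains only to check that this equivalence is compatible with the Tannakian structures in the way required to conclude that the dual morphism of gerbes is the asserted quotient. That compatibility is immediate: $\varphi^*$ is a symmetric monoidal exact functor (indeed it is so already at the level of all vector bundles, and a good moduli space morphism has $\varphi_*\mathcal O_{\mathfrak X}\,=\,\mathcal O_Y$), it is $k$-linear, and it commutes with the fiber functors because a fiber functor for $\pi_{Y/S}$ is obtained by further pulling back along a point, which can be chosen to factor through $\mathfrak X$. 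Hence $\varphi^*\,:\,\Vect\pi_{Y/S}\,\longrightarrow\,\Vect\pi_{\mathfrak X/S}$ has essential image exactly the subcategory corresponding to $\mathcal C$, and by Tannaka duality (in the form of \cite{rivano:cat_tan} as corrected in \cite{del:cat_tak}) the morphism $\pi_{\mathfrak X/S}\,\longrightarrow\,\pi_{Y/S}$ is the quotient dual to the inclusion $\mathcal C\,\hookrightarrow\,\EF(\mathfrak X)$, which is by construction the quotient by the normal sub-gerbe generated by the images of the $\mathcal G_x$.

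The only place any real work is hidden is the translation in the second paragraph — matching ``restriction to $\mathcal G_x$ is trivial as a vector bundle'' with ``the morphism $\mathcal G_x\,\longrightarrow\,\pi_{\mathfrak X/S}$ is trivial as a morphism of gerbes'' — which requires knowing that residual gerbes at closed points are themselves Tannakian so that duality applies to them; everything else is a formal consequence of Proposition \ref{prop:ef_good_mod} together with Theorem 7.9 of \cite{bv:nori_gerbe}. I expect this to be the short, essentially bureaucratic, coda to the substantive input, which is Alper's comparison of vector bundles on $\mathfrak X$ and on $Y$.
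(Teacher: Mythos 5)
Your proposal is correct and follows the same route as the paper, which simply cites the Tannakian interpretation of the fundamental gerbe (\cite[Theorem 7.9]{bv:nori_gerbe}) together with Proposition \ref{prop:ef_good_mod} and concludes by duality; your write-up just makes explicit the bookkeeping (identifying the quotient gerbe as the dual of the subcategory of objects trivialized on all $\mathcal G_x$, and matching that subcategory with the one in Proposition \ref{prop:ef_good_mod}) that the paper leaves implicit. The one point you flag as ``real work'' --- that triviality of $\mathcal E_{|\mathcal G_x}$ is dual to triviality of the morphism $\mathcal G_x\,\longrightarrow\,\pi_{\mathfrak X/S}$ over $k(\varphi(x))$ --- is exactly the content the paper builds into its definitions of trivial morphism and of the quotient, so nothing is missing.
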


\begin{proof}
This follows from the Tannakian interpretation of fundamental gerbes (see 
\cite[Theorem 7.9]{bv:nori_gerbe}) and Proposition \ref{prop:ef_good_mod} by duality.
\end{proof}

\begin{example}\label{ex_gen}
We assume that $k$ is of positive characteristic $p$.
Consider the standard action of $\mu_p\,\subset\, \mathbb G_m$ on ${\mathbb P}^1$ and
put $$\mathfrak X \,=\, [ {\mathbb P}^1 / \mu_p]\, .$$ Then
$\varphi\,:\,\mathfrak X\,\longrightarrow
\,{\mathbb P}^1$ is a good moduli space (because $\mathfrak X$ is in
fact tame, see \cite{aov:tame_stacks}), and Corollary \ref{cor:gen_fg_moduli}
applies: $\pi_{\mathfrak X/S}$ is generated by $\mathcal G_0$ and
$\mathcal G_\infty$. In fact, it is easy to show directly
that $\pi_{\mathfrak X/S}\,=\,\B \mu_p$.	
\end{example}

\subsection{Characterization of torsors coming from the moduli space}

\begin{definition}
	Let $f\,:\,\mathfrak X \,\longrightarrow\,
 \mathcal G$ be a morphism from an algebraic stack to a finite gerbe. We say that
\emph{$f$ is trivial on inertia} if the morphism $\mathcal I_{\mathfrak X}\,\longrightarrow\, 
\mathcal I_{\mathcal G}$ induced by $f$ factors through the unit morphism
$\mathcal G\,\longrightarrow\, \mathcal I_{\mathcal G}$.
\end{definition}

Clearly, $f$ is trivial on inertia if and only if for any section $\sigma \,:\, T\,\longrightarrow\, \mathfrak X$, the induced morphism of $T$--group spaces $\BAut_T \sigma \,\longrightarrow\, \BAut_T f(\sigma)$ is trivial.

The following corollary of Proposition \ref{prop:ef_good_mod} provides us with an
interpretation of $$\mathfrak X \,\longrightarrow\,
 \pi_{Y/S}$$ as the limit over all morphisms $\mathfrak X\,\longrightarrow\,
\mathcal G$ that are trivial on inertia.

\begin{corollary}\label{cor:morph_fact_moduli}
With the hypothesis of Corollary \ref{cor:gen_fg_moduli}, a given morphism to a finite
gerbe $$f\,:\,\mathfrak X \,\longrightarrow\,
\mathcal G$$ factors through $\mathfrak X\,\longrightarrow\,
 Y$ if and only if $f$ is trivial on inertia. 
\end{corollary}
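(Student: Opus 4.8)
The plan is to deduce this from Corollary \ref{cor:gen_fg_moduli} together with the universal property of the Nori fundamental gerbe. First I would observe that, since $\mathcal G$ is a finite gerbe, a morphism $f\,:\,\mathfrak X\,\longrightarrow\,\mathcal G$ is the same as a morphism to a finite gerbe, so by inflexibility it factors (uniquely) as $\mathfrak X\,\longrightarrow\,\pi_{\mathfrak X/S}\,\longrightarrow\,\mathcal G$; similarly morphisms $Y\,\longrightarrow\,\mathcal G$ correspond to morphisms $\pi_{Y/S}\,\longrightarrow\,\mathcal G$. Hence "$f$ factors through $\varphi\,:\,\mathfrak X\,\longrightarrow\,Y$'' is equivalent to "the induced morphism $\pi_{\mathfrak X/S}\,\longrightarrow\,\mathcal G$ factors through $\pi_{\mathfrak X/S}\,\longrightarrow\,\pi_{Y/S}$''. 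By Corollary \ref{cor:gen_fg_moduli}, $\pi_{Y/S}$ is the quotient of $\pi_{\mathfrak X/S}$ by the normal sub-gerbe generated by the images of the $\mathcal G_x\,\longrightarrow\,\pi_{\mathfrak X/S}$, so by the universal property of that quotient this is in turn equivalent to: all composites $\mathcal G_x\,\longrightarrow\,\pi_{\mathfrak X/S}\,\longrightarrow\,\mathcal G$ are trivial, i.e. all composites $\mathcal G_x\,\longrightarrow\,\mathfrak X\,\xrightarrow{f}\,\mathcal G$ are trivial (as $\mathcal G_x/k(\varphi(x))$--morphisms).

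It then remains to identify the condition "$\mathcal G_x\,\longrightarrow\,\mathcal G$ is trivial for every closed point $x$'' with "$f$ is trivial on inertia''. The plan is to prove both implications directly. For the forward direction, I would use that the residual gerbe $\mathcal G_x\,\longrightarrow\,\mathfrak X$ is the "universal'' receptacle through which a geometric point $\spec\Omega\,\longrightarrow\,\mathfrak X$ with image $x$ and its automorphisms pass: triviality of $\mathcal G_x\,\longrightarrow\,\mathcal G$ provides a lift $\spec k(\varphi(x))\,\longrightarrow\,\mathcal G$, and pulling back along the residual gerbe shows that the induced map on automorphism group schemes $\BAut\sigma\,\longrightarrow\,\BAut f(\sigma)$ is trivial for sections $\sigma$ factoring through $\mathcal G_x$; a limit/specialization argument (every geometric point of $\mathfrak X$ specializes to a closed point, using the locally Noetherian hypothesis) then upgrades this to all sections, which by the remark following the definition is exactly triviality on inertia. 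For the converse, triviality on inertia kills the induced map $\mathcal I_{\mathfrak X}\,\longrightarrow\,\mathcal I_{\mathcal G}$, so restricting to $\mathcal G_x$ one gets that $\mathcal G_x\,\longrightarrow\,\mathcal G$ has trivial induced map on inertia; since $\mathcal G_x$ is itself a gerbe over $k(\varphi(x))$ (or at least becomes one after the appropriate field extension) and $\mathcal G$ is a gerbe, a morphism of gerbes which is trivial on inertia admits a section over the base, giving the desired dotted arrow.

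I expect the main obstacle to be the careful bookkeeping of base fields: the $\mathcal G_x$ live over $k(\varphi(x))$ rather than over $k$, so the notion of "trivial'' morphism and the reduction between "trivial on $\mathcal G_x$'' and "trivial on inertia'' must be handled as $S'\,\longrightarrow\,S$--morphisms in the sense of the first definition above, and one must check that the quotient appearing in Corollary \ref{cor:gen_fg_moduli} is insensitive to this. The second delicate point is the specialization argument reducing triviality on all of $\mathcal I_{\mathfrak X}$ to triviality on the $\mathcal I_{\mathcal G_x}$ for closed $x$: one should phrase it as the statement that the $\mathcal G_x\,\longrightarrow\,\mathfrak X$, $x$ ranging over closed points, are jointly "surjective enough'' to detect whether a morphism to a finite gerbe is trivial on inertia — this is where finiteness of $\mathcal G$ and the locally Noetherian hypothesis on $\mathfrak X$ enter, and it is essentially the stacky analogue of the fact that a scheme morphism into a finite scheme is constant on connected components and hence determined by closed points.
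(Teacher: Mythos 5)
Your proposal is correct in substance but routes the argument differently from the paper. The paper proves the hard direction directly: by Tannaka duality, $f$ factors through $Y$ if and only if $f^*\colon \Vect\mathcal G\to\EF\mathfrak X$ lands in $\EF Y$, which by Proposition \ref{prop:ef_good_mod} means that $f^*V_{|\mathcal G_x}$ is trivial for every $V$ and every closed $x$; this is then deduced from triviality on inertia via the quoted lemma of Alper (a vector bundle is trivial on $\mathcal G_x$ iff $\BAut_{\mathfrak X}x$ acts trivially on its fibre). You instead pass to fundamental gerbes, invoke the universal property of the quotient in Corollary \ref{cor:gen_fg_moduli}, and reduce to the equivalence ``all $\mathcal G_x\to\mathcal G$ trivial $\Leftrightarrow$ $f$ trivial on inertia.'' That reduction is sound (the subcategory of objects trivialized by all the $\alpha_i^*$ is Tannakian, so the quotient's universal property does apply to the induced morphism $\pi_{\mathfrak X/S}\to\mathcal G$), and your approach buys a cleaner conceptual picture at the level of gerbes; but note that the real mathematical content has not moved: your assertion that ``a morphism of gerbes which is trivial on inertia admits a section over the base'' is exactly the Tannakian reformulation of the paper's lemma (a representation of a gerbe on which the band acts trivially descends to the base), so you should either prove it or cite it rather than treat it as folklore. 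Finally, your forward direction is needlessly laborious: once $f$ factors through the algebraic space $Y$, the map $\mathcal I_{\mathfrak X}\to\mathcal I_{\mathcal G}$ factors through $\mathcal I_Y=Y$ and is therefore trivial, so the specialization/density argument you sketch (which in any case needs $\mathfrak X$ of finite type, or at least density of closed points, to run) can be dispensed with entirely.
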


\textit{Proof.}~ The ``only if" direction is obvious, thus we assume $f$ is trivial on
inertia. By  Tannaka duality, we must prove that the functor $$f^*\,:\,\Vect \mathcal G 
\,\longrightarrow\,
 \EF \mathfrak X$$ factors through $\EF Y$, or in other words, according to Proposition
\ref{prop:ef_good_mod}, that for any representation $V$ of $\mathcal G$, and any closed
point $x$ of $\mathfrak X$, the restriction $f^*V_{|\mathcal G_x}$ is trivial. This
follows from the fact that for any geometric point $x\,:\,\spec \Omega\,\longrightarrow\,
 \mathfrak X$, the morphism $$\BAut_{\mathfrak X} x\,\longrightarrow\,
 \BAut_{\mathcal G} x $$ is trivial by hypothesis, and the following lemma.

\begin{lemma}[\cite{alper:good_moduli} Remark 10.2]
Let $\mathcal F$ be a vector bundle on an algebraic stack $\mathfrak X$, and $x\,:\,
\spec \Omega \,\longrightarrow\,
 \mathfrak X$ be a geometric point with closed image. Then $\mathcal F_{|\mathcal G_x}$ is trivial if and only if $\BAut_{\mathfrak X} x$ acts trivially on 	$\mathcal F\bigotimes_{\mathcal O_{\mathfrak X,x}} \Omega$.
\end{lemma}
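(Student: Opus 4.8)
The plan is to reduce this to the elementary fact that a finite--dimensional representation of the automorphism group scheme underlies the trivial vector bundle on its classifying stack exactly when the group acts trivially; the reduction is carried out by base changing $\mathcal G_x$ to a field over which it becomes neutral.

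First I would record the geometric set-up. Since $x$ has closed image, the geometric point $x\,:\,\spec\Omega\,\longrightarrow\,\mathfrak X$ factors through the residual gerbe as $\spec\Omega\,\xrightarrow{\ \bar x\ }\,\mathcal G_x\,\hookrightarrow\,\mathfrak X$, where $\mathcal G_x$ is a gerbe over the residue field $k(x)$ of $x$; because $\mathcal G_x\,\hookrightarrow\,\mathfrak X$ is a monomorphism one has $\underline{\operatorname{Aut}}_{\mathcal G_x}(\bar x)\,=\,\BAut_{\mathfrak X}x\,=:\,G$, and $\bar x^*(\mathcal F_{|\mathcal G_x})\,=\,x^*\mathcal F\,=\,\mathcal F\otimes_{\mathcal O_{\mathfrak X,x}}\Omega$ carrying its $G$--action. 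Next I would base change along $\spec\Omega\,\longrightarrow\,\spec k(x)$: the point $\bar x$ splits the gerbe $\mathcal G_x\times_{k(x)}\Omega\,\longrightarrow\,\spec\Omega$, giving a canonical equivalence $\mathcal G_x\times_{k(x)}\Omega\,\simeq\,\B_\Omega G$ that identifies the tautological atlas $\spec\Omega\,\longrightarrow\,\B_\Omega G$ with $\bar x$. Under $\Vect(\B_\Omega G)\,\simeq\,\operatorname{Rep}_\Omega G$ the pullback of $\mathcal F_{|\mathcal G_x}$ to $\mathcal G_x\times_{k(x)}\Omega$ becomes precisely the representation of $G$ on $\mathcal F\otimes_{\mathcal O_{\mathfrak X,x}}\Omega$ appearing in the statement, and such a representation is trivial (as a bundle on $\B_\Omega G$) exactly when $G$ acts trivially. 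This already yields the ``only if'' direction, and reduces ``if'' to the claim that $\mathcal F_{|\mathcal G_x}$ is trivial once its pullback to $\mathcal G_x\times_{k(x)}\Omega$ is.

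For that descent step I would consider the evaluation morphism $\operatorname{ev}\,:\,\Gamma(\mathcal G_x,\mathcal F_{|\mathcal G_x})\otimes_{k(x)}\mathcal O_{\mathcal G_x}\,\longrightarrow\,\mathcal F_{|\mathcal G_x}$: a rank--$r$ bundle is trivial iff $\operatorname{ev}$ is an isomorphism and $\Gamma$ has dimension $r$ over $k(x)$. Since $\mathcal G_x$ is quasi-compact and quasi-separated, global sections commute with the flat base change $\Omega/k(x)$, so after base change $\operatorname{ev}$ becomes the evaluation map $\Omega^{\oplus r}\otimes\mathcal O\,\longrightarrow\,\mathcal O^{\oplus r}$ of the trivial bundle on $\B_\Omega G$, an isomorphism; as $\mathcal G_x\times_{k(x)}\Omega\,\longrightarrow\,\mathcal G_x$ is faithfully flat, $\operatorname{ev}$ itself is an isomorphism, and $\mathcal F_{|\mathcal G_x}$ is trivial. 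The one genuinely delicate point is the bookkeeping behind the equivalence $\mathcal G_x\times_{k(x)}\Omega\,\simeq\,\B_\Omega G$ and the compatibility of the two $\BAut_{\mathfrak X}x$--actions involved; everything else is formal. One can also bypass base change entirely and argue directly by fppf descent along the atlas $\bar x$: the groupoid $\spec\Omega\times_{\mathcal G_x}\spec\Omega$ is a $G$--bitorsor over $\spec(\Omega\otimes_{k(x)}\Omega)$, so a descent datum on $x^*\mathcal F$ is determined by its restriction to the diagonal copy of $G$ --- which is exactly the given action --- and triviality of that action forces the descent datum, hence $\mathcal F_{|\mathcal G_x}$, to be trivial.
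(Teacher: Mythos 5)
Your argument is correct and is essentially the paper's own proof: both reduce triviality of $\mathcal F_{|\mathcal G_x}$ to the evaluation (adjunction) map ${p_x}^*{p_x}_*\mathcal F_{|\mathcal G_x}\longrightarrow\mathcal F_{|\mathcal G_x}$ being an isomorphism, and check this after pulling back along the fppf cover $\B\,\BAut_{\mathfrak X}x\simeq\mathcal G_x\otimes_{k(x)}\Omega\longrightarrow\mathcal G_x$, where it becomes the statement about the representation $\mathcal F\otimes_{\mathcal O_{\mathfrak X,x}}\Omega$. You are only slightly more explicit than the paper about the flat base change for global sections that this step uses.
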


\begin{proof}
The last assertion means that $\mathcal F_{|\B\BAut_{\mathfrak X} x}$ is trivial. 
If $p_x\,:\,\mathcal G_x \,\longrightarrow\, \spec k(x)$ is the structure 
morphism, then $\mathcal F_{|\mathcal G_x}$ is trivial if and only if the morphism 
$${p_x}^*{p_x}_*\mathcal F_{|\mathcal G_x}\,\longrightarrow\,\mathcal F_{|\mathcal G_x}$$
is an isomorphism. Since this property is local, it can be checked on the cover
$\B\BAut_{\mathfrak X} x \,\longrightarrow\, \mathcal G_x$.
\end{proof}

Let us now specialize the previous discussion to neutral finite gerbes. We first recall a definition due to Joshua \cite[Definition 3.1 (i)]{joshua_RR}.

\begin{definition}
	\label{def:pres_inertia}
	A morphism of algebraic stacks $\mathfrak X' \,\longrightarrow\,
 \mathfrak X$ is \emph{isovariant} if the following diagram is Cartesian:
\[
\xymatrix{ I_{\mathfrak X'/ S} \ar[r]\ar[d] & I_{\mathfrak X/ S} \ar[d] \\
\mathfrak X' \ar[r] & \mathfrak X
}
\]
\end{definition}

\begin{remark}\mbox{}
	\begin{enumerate}
\item In \cite{noohi:fund_group_alg_stack}, the  alternative name ``fixed points
reflecting morphism" is used.

\item Monomorphisms of algebraic stacks are isovariant (Proposition 
\ref{rest_inert_stack}). However of course, there are many more examples, in 
particular any morphism between algebraic spaces is isovariant.

\item It is easy to see that the property of being isovariant is stable by base 
change, but is not local.
\end{enumerate}
\end{remark}

\begin{corollary}
\label{cor:tors-mod-space}
Let $G/S$ be a finite group scheme. With the hypothesis of Corollary
\ref{cor:gen_fg_moduli}, a $G$--torsor $\mathfrak X' \,\longrightarrow\,
 \mathfrak X$ descends to the moduli space if and only if it is isovariant.
\end{corollary}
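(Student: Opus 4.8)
The strategy is to encode the $G$--torsor by its classifying morphism and reduce to Corollary \ref{cor:morph_fact_moduli}. Since $G/S$ is a finite group scheme, $\B G = [S/G]$ is a finite gerbe over $S$ (its diagonal is $G$, hence finite, and $S \,\longrightarrow\, \B G$ is a finite flat chart). Giving a $G$--torsor $\pi\,:\,\mathfrak X' \,\longrightarrow\, \mathfrak X$ is the same as giving a morphism $f\,:\,\mathfrak X \,\longrightarrow\, \B G$, and then $\mathfrak X' \,=\, \mathfrak X \times_{\B G} S$; similarly $G$--torsors on $Y$ are classified by morphisms $Y \,\longrightarrow\, \B G$, and pullback along $\varphi$ corresponds to precomposition with $\varphi$. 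Hence $\pi$ descends to $Y$ if and only if $f$ factors through $\varphi\,:\,\mathfrak X \,\longrightarrow\, Y$, which by Corollary \ref{cor:morph_fact_moduli} (applicable under the present hypotheses) holds if and only if $f$ is trivial on inertia. It therefore suffices to prove that $f$ is trivial on inertia if and only if $\pi$ is isovariant.

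For this I would compare both conditions after restriction along sections. Being a $G$--torsor, $\pi$ is representable, so the relative inertia $I_{\mathfrak X'/\mathfrak X}$ is trivial and the canonical map $I_{\mathfrak X'/S} \,\longrightarrow\, I_{\mathfrak X/S} \times_{\mathfrak X} \mathfrak X'$ is a monomorphism; thus $\pi$ is isovariant exactly when this map is an isomorphism, i.e. when for every section $\sigma'\,:\,T \,\longrightarrow\, \mathfrak X'$, with image $\sigma\,:\,T \,\longrightarrow\, \mathfrak X$, the induced morphism of $T$--group spaces $\BAut_T \sigma' \,\longrightarrow\, \BAut_T \sigma$ is an isomorphism. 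On the other hand, since $\mathfrak X' = \mathfrak X \times_{\B G} S$, such a section $\sigma'$ is precisely $\sigma$ together with a trivialization of the $G$--torsor $f(\sigma)$ on $T$, and a chase through the $2$--fibre product (using that $S$ has no nontrivial automorphisms) identifies
\[
\BAut_T \sigma' \,=\, \ker\bigl(\BAut_T \sigma \,\longrightarrow\, \BAut_T f(\sigma)\bigr)\, ,
\]
the kernel taken over $T$. Consequently $\BAut_T \sigma' \,\longrightarrow\, \BAut_T \sigma$ is an isomorphism if and only if $\BAut_T \sigma \,\longrightarrow\, \BAut_T f(\sigma)$ is trivial. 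Because $\pi$ is an fppf epimorphism, every section $\sigma$ of $\mathfrak X$ lifts to $\mathfrak X'$ after some fppf base change on $T$, and triviality of $\BAut_T \sigma \,\longrightarrow\, \BAut_T f(\sigma)$ is fppf--local in $T$; hence $\pi$ is isovariant if and only if $\BAut_T \sigma \,\longrightarrow\, \BAut_T f(\sigma)$ is trivial for all $\sigma\,:\,T \,\longrightarrow\, \mathfrak X$, which is exactly the condition that $f$ is trivial on inertia.

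The main obstacle I anticipate is precisely the identification of $\BAut_T \sigma'$ with the kernel displayed above: it requires unwinding the $2$--categorical fibre product $\mathfrak X \times_{\B G} S$ with care, and the passage from ``for all liftable $\sigma$'' to ``for all $\sigma$'' must be justified via the fppf--local character of both ``epimorphism'' and ``trivial morphism of group spaces''. Everything else is formal: recognizing $\B G$ as a finite gerbe, expressing $G$--torsors and their descent as morphisms to $\B G$, and quoting Corollary \ref{cor:morph_fact_moduli}. Alternatively, one could argue entirely with inertia stacks, using the exact sequences attached to $\mathfrak X' \to \mathfrak X \to \B G$ and $\mathfrak X' \to S \to \B G$ together with $I_{S/\B G} = S$, but the bookkeeping is comparable.
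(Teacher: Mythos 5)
Your proposal is correct and follows essentially the same route as the paper: encode the torsor as $f\,:\,\mathfrak X \,\longrightarrow\, \B G$, reduce via Corollary \ref{cor:morph_fact_moduli} to the condition that $f$ is trivial on inertia, and then identify that condition with isovariance using the exactness of $1\,\longrightarrow\, I_{\mathfrak X'/S} \,\longrightarrow\, \left(I_{\mathfrak X/S}\right)_{|\mathfrak X'} \,\longrightarrow\, \left(I_{\B G/S}\right)_{|\mathfrak X'}$, which is exactly your section-wise kernel computation. The paper leaves the final fppf-descent step along $\mathfrak X' \,\longrightarrow\, \mathfrak X$ implicit, whereas you spell it out; that is a presentational difference only.
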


\begin{proof}
According to 
Corollary \ref{cor:morph_fact_moduli}, the corresponding morphism $$\mathfrak X \,\longrightarrow\,
 \B G$$ factors through $\mathfrak X \,\longrightarrow\,
 Y$ if and only if it is trivial on inertia. But the sequence
$$\mathfrak X' \,\longrightarrow\,
 \mathfrak X \,\longrightarrow\,
 \B G$$ induces an exact sequence $$1\,\longrightarrow\,
 I_{\mathfrak X'/S} \,\longrightarrow\,
 \left(I_{\mathfrak X/S}\right)_{|\mathfrak X'} \,\longrightarrow\,
 \left(I_{\B G/S}\right)_{|\mathfrak X'}$$
and the result follows.
\end{proof}

\section{Nori-uniformization of stacks with finite inertia}\label{sec:unif}

In this section, we will restrict ourselves to \emph{algebraic stacks with finite inertia}, that is, the inertia stack $I_{\mathfrak X}\,\longrightarrow\,
 \mathfrak X$ is a finite group space. In particular tame stacks in the sense of \cite{aov:tame_stacks} are of this class.

\subsection{Nori-uniformizable stack} 

\begin{definition}
Let $\mathfrak X$ be a stack over a field $k$. We will say that $\mathfrak X$ is \emph{Nori-uniformizable} 
if there exists a representable $k$--morphism $\mathfrak X\,\longrightarrow\,
 \mathcal G$, where $\mathcal G/S$ is a finite gerbe.
\end{definition}

\begin{example}
	Assume that $k$ is of positive characteristic $p$ and put $$\mathfrak X \,= \,
[ {\mathbb P}^1 / \mu_p]$$ as in Example \ref{ex_gen}. Then $\mathfrak X$ is 
Nori-uniformizable, but it is not uniformizable by an algebraic space in the sense of \cite{beh-noo:unif}, since it is clear that the pro-\'etale fundamental gerbe $\pi^{et}_{\mathfrak X/S}$ is trivial.
\end{example}

Clearly, if there exists a finite $k$--group scheme $G$ and a $G$--torsor 
$X'\,\longrightarrow \,\mathfrak X$, where $X'$ is an algebraic space, then 
$\mathfrak X$ is Nori-uniformizable. As A. Vistoli indicated to us, it turns out 
that the converse is true. The key point is the following:

\begin{proposition}
	\label{prop:fin_gerb_alg}
Let $\mathcal G/S$ be an algebraic stack that is 
a fppf gerbe. Then $\mathcal G/S$ is smooth.
\end{proposition}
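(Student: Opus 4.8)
The plan is to reduce to the case of a classifying stack and then to exhibit an honest smooth presentation of it. Since smoothness of a morphism is local for the fppf topology on the base, and since $\mathcal G\,\longrightarrow\, S$ is an fppf gerbe, there is an fppf cover of $S$ over which $\mathcal G$ acquires a section; refining it to an affine scheme of finite type over $k$ and restricting to a closed point, one finds a finite extension $k'/k$ together with an object $\xi\,\in\,\mathcal G(k')$. As $\spec k'\,\longrightarrow\, S$ is again an fppf cover, it suffices to prove that $\mathcal G_{k'}\,=\,\mathcal G\times_S\spec k'\,\longrightarrow\,\spec k'$ is smooth. Now $\mathcal G_{k'}$ is a neutral fppf gerbe over $\spec k'$, so $\mathcal G_{k'}\,\simeq\,\B_{k'} G$ with $G\,=\,\BAut_{k'}(\xi)$ a flat group algebraic space of finite type over $k'$ — and even a finite, hence affine, group scheme over $k'$ when $\mathcal G$ is a finite gerbe, since then the diagonal of $\mathcal G$ is finite. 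After renaming $k'$ as $k$, the problem is thus to show that $\B_k G\,\longrightarrow\,\spec k$ is smooth.

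The point I would stress is that a chart of $\B_k G$ such as the tautological one $\spec k\,\longrightarrow\,\B_k G$ is a $G$--torsor, hence typically \emph{not} smooth (for instance when $G\,=\,\mu_p$), so one cannot read off smoothness from it; the content of the argument is the construction of a chart that is simultaneously smooth over $\B_k G$ and smooth over $k$. To do this, choose a closed immersion $G\,\hookrightarrow\,\GL_{n,k}$ (for $G$ finite the regular representation does it; in general this is available whenever $G$ is affine, which is the only case used here). The fppf quotient $V\,:=\,\GL_{n,k}/G$ is representable by a $k$--scheme, and the projection $q\,:\,\GL_{n,k}\,\longrightarrow\, V$ is a $G$--torsor. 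Since $q$ is faithfully flat and $\GL_{n,k}$ is smooth over $k$, the scheme $V$ is smooth over $k$: it is of finite type over $k$, and after base change to an algebraic closure it is a faithfully flat quotient of the regular scheme $\GL_{n,\overline k}$, hence regular, hence smooth over $\overline k$.

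Finally, the torsor $q$ is classified by a morphism $v\,:\,V\,\longrightarrow\,\B_k G$. Base-changing $v$ along the canonical cover $\spec k\,\longrightarrow\,\B_k G$ recovers the total space of $q$, namely $\GL_{n,k}$, equipped with its structure morphism to $\spec k$, which is smooth and surjective; hence $v$ is smooth and surjective. Thus $V\,\longrightarrow\,\B_k G$ is a smooth atlas whose source is smooth over $k$, which shows that $\B_k G\,\longrightarrow\,\spec k$ is smooth, and therefore that $\mathcal G\,\longrightarrow\, S$ is smooth. The only delicate point in all of this is the replacement of the tautological, non-smooth chart of the gerbe by the genuinely smooth chart $\GL_{n}\,\longrightarrow\,\GL_{n}/G$; the rest is formal descent. (When $G$ happens to be smooth the statement is of course immediate, $\spec k\,\longrightarrow\,\B_k G$ being then already a smooth atlas.)
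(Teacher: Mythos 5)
The paper offers no argument of its own here: its ``proof'' is a pointer to Bergh's Proposition A.2, which proceeds quite differently (via flatness of $\B G \to S$ obtained by descent along the fppf chart, plus the infinitesimal lifting criterion for torsors). Your proof is therefore a genuinely different, self-contained route, and it is correct in the case the paper actually uses. The reduction to $\B_{k'}G$ over a finite extension $k'/k$ is sound (smoothness is fppf-local on the base, and a closed point of an fppf cover of $\spec k$ has finite residue field extension), and your key move --- discarding the non-smooth tautological chart $\spec k \to \B_k G$ in favour of $\GL_{n,k}/G \to \B_k G$ --- checks out: $\GL_{n,k}/G$ is a quasi-projective scheme by Chevalley/Demazure--Gabriel, it is smooth because regularity descends along the faithfully flat $\GL_{n,k}\to \GL_{n,k}/G$, and the classifying morphism is smooth and surjective because its pullback along the fppf cover $\spec k \to \B_k G$ is $\GL_{n,k}\to \spec k$. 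For finite gerbes --- the only case invoked in the paper, namely in Proposition \ref{prop:nori_unif_equiv} --- this is a complete and rather more elementary argument than the cited one.

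The one genuine gap is that your argument does not cover the statement in the generality in which it is phrased. For an arbitrary algebraic stack that is an fppf gerbe, the automorphism group scheme $G$ over $k'$ is only known to be flat and of finite type; it need be neither affine nor smooth (e.g.\ $G = A\times\mu_p$ with $A$ an abelian variety, whose classifying stack is an algebraic fppf gerbe). In that case no closed embedding $G\hookrightarrow \GL_{n}$ exists, and your parenthetical escape route for smooth $G$ does not apply either, so the argument simply stops. The gap is repairable: after passing to $\bar k$, Chevalley's theorem provides an exact sequence $1\to G_{\mathrm{aff}}\to G\to A\to 1$ with $G_{\mathrm{aff}}$ affine and $A$ an abelian variety; the morphism $\B G\to \B A$ pulls back along $\spec \bar k\to \B A$ to $\B G_{\mathrm{aff}}\to\spec\bar k$, which is smooth by your affine case, and $\B A\to\spec\bar k$ is smooth since $A$ is. Alternatively one argues as in the reference via unobstructedness of deformations of fppf torsors over affine bases. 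You should either add such a dévissage or state the proposition only for gerbes with affine (e.g.\ finite) diagonal, which is all that is needed in this paper.
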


\begin{proof}
See \cite[Proposition A.2]{ber:functorial}.
\end{proof}

\begin{lemma}\label{lem:Weil_rest}
	Let $\mathcal G/S$ be a finite gerbe, and $k'/ k$ be a finite separable extension so that $\mathcal G(k')\,\neq\, \emptyset$. Denote by $\R_{k'/k}\cdot$ the Weil restriction along $k'/k$. Then 
	\begin{enumerate}
		\item $\R_{k'/k} \mathcal G_{k'}$ is a finite neutral gerbe over $k$.
		\item The canonical morphism $ \mathcal G\,\longrightarrow\,  \R_{k'/k} \mathcal G_{k'}$ is representable.
	\end{enumerate}
\end{lemma}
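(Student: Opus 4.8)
\textit{Proof plan.}
The plan is to reduce both assertions to the behaviour of Weil restriction on classifying stacks. Since $\mathcal G(k')\,\neq\,\emptyset$, the finite gerbe $\mathcal G_{k'}$ is neutral, so $\mathcal G_{k'}\,\simeq\,\B_{k'} H$ for a finite flat group scheme $H$ over $k'$. Writing, for a $k$-scheme $T$, $\pi_T\,:\,T_{k'}\,:=\,T\times_S\spec k'\,\longrightarrow\, T$ for the projection (which is finite \'etale because $k'/k$ is separable), the first step is to establish the identification
\[
	\R_{k'/k}\bigl(\B_{k'} H\bigr)\,\simeq\,\B_k\bigl(\R_{k'/k} H\bigr).
\]
I would do this by sending an $H$-torsor $Q$ on $T_{k'}$ to its pushforward $(\pi_T)_*Q$ on $T$: the functor $(\pi_T)_*$ preserves fibre products, and since Weil restriction along a finite locally free morphism carries a finite locally free cover to a finite locally free cover, $(\pi_T)_*Q$ is again a torsor, namely under $(\pi_T)_*\bigl(H_{T_{k'}}\bigr)\,=\,(\R_{k'/k} H)_T$; a quasi-inverse pulls an $\R_{k'/k} H$-torsor back to $T_{k'}$ and contracts it along the counit $(\R_{k'/k} H)_{k'}\,\longrightarrow\, H$. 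Granting this, $\R_{k'/k} H$ is a finite flat group scheme over $k$ (Weil restriction of a finite flat scheme along a finite locally free morphism is finite flat), so $\R_{k'/k}\mathcal G_{k'}\,\simeq\,\B_k(\R_{k'/k} H)$ is a finite and manifestly neutral gerbe over $S$; this is assertion (1).

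For assertion (2), the canonical morphism is the unit $\eta\,:\,\mathcal G\,\longrightarrow\,\R_{k'/k}(\mathcal G_{k'})$ of the adjunction between base change along $k'/k$ and $\R_{k'/k}$. A morphism of algebraic stacks is representable precisely when it is faithful, and --- the stacks here being of finite type with finite inertia --- this may be tested on geometric points, where it amounts to asking that the induced homomorphisms of automorphism group schemes be monomorphisms. So fix a geometric point $x\,:\,\spec\Omega\,\longrightarrow\,\mathcal G$. Since $k'/k$ is separable and $\Omega$ is algebraically closed, $\Omega\otimes_k k'\,\simeq\,\Omega^{\,n}$ with $n\,=\,[k':k]$; unwinding the defining formula $(\R_{k'/k}\mathcal G_{k'})(-)\,=\,\mathcal G_{k'}\bigl((-)_{k'}\bigr)$, the object $\eta(x)$ is the base change of $x$ along $\spec(\Omega\otimes_k k')\,\longrightarrow\,\spec\Omega$, that is, the object $x$ on each of the $n$ copies of $\spec\Omega$. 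Hence $\BAut_{\R_{k'/k}\mathcal G_{k'}}\eta(x)\,\simeq\,(\BAut_{\mathcal G} x)^{\,n}$ and the homomorphism induced by $\eta$ is the diagonal embedding $\BAut_{\mathcal G} x\,\hookrightarrow\,(\BAut_{\mathcal G} x)^{\,n}$, which is a monomorphism. This gives assertion (2).

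The main obstacle is the first step: showing that Weil restriction along the finite \'etale morphism $\spec k'\,\longrightarrow\, S$ preserves algebraicity of the stacks at hand and sends $\B_{k'} H$ to $\B_k(\R_{k'/k} H)$ --- concretely, that the pushforward along $\pi_T$ of a torsor is again a torsor, which rests on the standard theory of Weil restriction along finite locally free morphisms. Once this is in place, assertion (1) is immediate, and the content of assertion (2) is merely the splitting $\Omega\otimes_k k'\,\simeq\,\Omega^{\,n}$ together with the fact that the unit of the adjunction becomes, on automorphism groups, the diagonal.
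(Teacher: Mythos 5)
Your argument is correct, but it takes a genuinely different route from the paper's, which is worth spelling out. The paper's entire proof pivots on the single formula $\left(\R_{k'/k}\mathcal G_{k'}\right)_{k^{sep}}\simeq \mathcal G_{k^{sep}}^{\times n}$ and then descends everything from $k^{sep}$: for (1) it quotes \cite[Lemma 6.2]{bv:nori_gerbe} to see that the Weil restriction is a finite stack and \cite[Proposition 4.3]{bv:nori_gerbe} to reduce the gerbe property to geometric connectedness and reducedness, both read off from the product; for (2) it uses that representability is fppf-local on the base (via Proposition \ref{char_rep}\eqref{char_rep_5}) and identifies the morphism over $k^{sep}$ with $\B G\to \B(G^{\times n})$. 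You instead work over $k$ itself: you neutralize $\mathcal G_{k'}\simeq \B_{k'}H$ and prove $\R_{k'/k}\B_{k'}H\simeq \B_k(\R_{k'/k}H)$ by Shapiro's lemma for torsors along the finite \'etale morphism $\spec k'\to S$, which yields (1) in one stroke and without the descent criteria; for (2) you compute the map on automorphism group schemes at a geometric point and find the diagonal $\BAut x\to (\BAut x)^{\times n}$. The trade-off is clear: your route produces an explicit model of the Weil restriction over $k$ but places all the weight on the Shapiro-type equivalence, which you correctly identify as the main obstacle and only sketch (it is true and standard, but it is the real content of your part (1)); the paper avoids Shapiro entirely by only computing after base change, at the cost of needing descent statements for ``finite gerbe'' and ``representable''. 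The terminal computation --- the diagonal $G\to G^{\times n}$ is a monomorphism --- is the same in both. One small repair: your justification that representability ``may be tested on geometric points'' should not rest on ``finite type with finite inertia'' as such; either invoke Proposition \ref{char_rep}\eqref{char_rep_3} together with the facts that the relative inertia is a finite group space and that a finite group space with a section is trivial exactly when its geometric fibres are, or, more simply here, note that a geometric point of the gerbe $\mathcal G$ is an fppf cover of it, so checking at one such point suffices.
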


\begin{proof}
We fix a separable closure $k^{sep}$ of $k$. Then if $n\,=\,[k':k]$, we have:
	\begin{equation*}
	\label{eq:Weil_rest}
		\left(\R_{k'/k} \mathcal G_{k'}\right)_{k^{sep}}
\,\simeq\, \mathcal G_{k^{sep}}^{\times n}\; .
		\end{equation*}
       	\begin{enumerate}

\item From \cite[Lemma 6.2]{bv:nori_gerbe} we know that $\R_{k'/k} \mathcal G_{k'}$ is 
a finite stack. To prove that is it a finite gerbe, according to \cite[Proposition 
4.3]{bv:nori_gerbe}, it is enough to prove that it is geometrically connected and 
geometrically reduced. But if $\mathcal G_{k^{sep}}\,\simeq\, \B G$, it follows from 
the displayed formula that $\left(\R_{k'/k} \mathcal G_{k'}\right)_{k^{sep}} 
\,\simeq\, \B \left(G^{\times n} \right)$, hence $\left(\R_{k'/k} \mathcal 
G_{k'}\right)_{k^{sep}}$ is a gerbe, and so is geometrically connected and 
geometrically reduced. To conclude, by definition of Weil restriction, $\R_{k'/k} 
\mathcal G_{k'}(k)\,=\, \mathcal G_{k'}(k')\,\neq\, \emptyset$, that is, $\R_{k'/k} 
\mathcal G_{k'}$ is a neutral gerbe over $k$.
	  
\item Proposition \ref{char_rep}\eqref{char_rep_5} and the fact that being 
a monomorphism is local on the base for the fppf topology, \cite[Tag 
02YK]{stacks-project}, together show that being representable is also local on the base for 
the fppf topology. So it is enough to prove that $\mathcal 
G_{k^{sep}}\,\longrightarrow\, \left(\R_{k'/k} \mathcal G_{k'}\right)_{k^{sep}} $ 
is representable. But if $\mathcal G_{k^{sep}}\,\simeq\, \B G$, this morphism 
identifies with $\B G \,\longrightarrow\, \B \left(G^{\times n} \right)$, which is 
representable since the diagonal morphism $G\,\longrightarrow\, G^{\times n}$ is a 
monomorphism.
\end{enumerate}
\end{proof}

\begin{proposition}\label{prop:nori_unif_equiv}
Let $\mathfrak X$ be a stack over a field $k$. Then $\mathfrak X$ is 
Nori-uniformizable if and only if there exists a finite $k$--group scheme $G$ and 
a $G$--torsor $X'\,\longrightarrow \,\mathfrak X$, where $X'$ is an algebraic 
space.
\end{proposition}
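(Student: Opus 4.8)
The plan is to prove the non-trivial direction: assuming $\mathfrak X$ is Nori-uniformizable, i.e.\ there is a representable $k$-morphism $f\colon \mathfrak X\,\longrightarrow\,\mathcal G$ to a finite gerbe, produce a finite $k$-group scheme $G$ and a $G$-torsor $X'\,\longrightarrow\,\mathfrak X$ with $X'$ an algebraic space. The idea is first to reduce to the case where $\mathcal G$ is \emph{neutral}, and then to observe that a neutral finite gerbe is of the form $\B G$ for a finite flat group scheme $G$, so that the $2$-fiber product $X'\,=\,\mathfrak X\times_{\B G}\spec k$ is a $G$-torsor over $\mathfrak X$; it remains to check $X'$ is an algebraic space, which will come from representability of $f$.

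First I would arrange the neutralization. Since $\mathcal G/S$ is a fppf gerbe, by Proposition~\ref{prop:fin_gerb_alg} it is smooth over $S$, so $\mathcal G(k^{sep})\,\neq\,\emptyset$; by a standard limit/spreading-out argument there is a finite separable extension $k'/k$ with $\mathcal G(k')\,\neq\,\emptyset$. Then Lemma~\ref{lem:Weil_rest} applies: $\R_{k'/k}\mathcal G_{k'}$ is a finite \emph{neutral} gerbe over $k$, and the canonical morphism $\mathcal G\,\longrightarrow\,\R_{k'/k}\mathcal G_{k'}$ is representable. Composing,
\[
\mathfrak X\,\longrightarrow\,\mathcal G\,\longrightarrow\,\R_{k'/k}\mathcal G_{k'}
\]
is a representable $k$-morphism to a finite neutral gerbe, because a composite of representable morphisms is representable. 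Thus without loss of generality $\mathcal G\,\simeq\,\B G$ for a finite flat (indeed finite) $k$-group scheme $G$, and the morphism $\mathfrak X\,\longrightarrow\,\B G$ is representable.

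Next I would form $X'\,=\,\mathfrak X\times_{\B G}\spec k$, where $\spec k\,\longrightarrow\,\B G$ is the canonical atlas. Since $\spec k\,\longrightarrow\,\B G$ is a $G$-torsor (faithfully flat, affine), its pullback $X'\,\longrightarrow\,\mathfrak X$ is a $G$-torsor; and since $\mathfrak X\,\longrightarrow\,\B G$ is representable, its base change $X'\,\longrightarrow\,\spec k$ is representable by an algebraic space — equivalently $X'$ is an algebraic space. This gives exactly the required finite $k$-group scheme $G$ and $G$-torsor $X'\,\longrightarrow\,\mathfrak X$ with $X'$ an algebraic space; the converse implication is the trivial observation already recorded before the statement of Proposition~\ref{prop:fin_gerb_alg}.

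The main obstacle I anticipate is purely the neutralization step — namely being sure that $\mathcal G(k')\,\neq\,\emptyset$ for a finite \emph{separable} $k'/k$, which is where smoothness of $\mathcal G$ (Proposition~\ref{prop:fin_gerb_alg}) is essential: over an imperfect field a non-smooth gerbe could fail to be neutralized by any separable extension, and separability is needed so that the Weil restriction in Lemma~\ref{lem:Weil_rest} behaves well. Everything else (composites of representable morphisms being representable, base change of a torsor being a torsor, and the identification "representable over a point $\iff$ algebraic space") is routine.
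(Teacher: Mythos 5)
Your proposal is correct and follows essentially the same route as the paper: smoothness of the finite gerbe (Proposition~\ref{prop:fin_gerb_alg}) yields a section over a finite separable extension $k'/k$, Lemma~\ref{lem:Weil_rest} gives a representable morphism to the neutral finite gerbe $\R_{k'/k}\mathcal G_{k'}\simeq \B G$, and pulling back the atlas $\spec k\to \B G$ produces the desired $G$--torsor with algebraic-space total space. The only difference is that you spell out the final torsor construction explicitly, whereas the paper leaves it implicit after establishing the representable morphism to a neutral gerbe.
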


\begin{proof}
It is enough to prove that any finite gerbe $\mathcal G/S$ has this last property. Since surjective and smooth morphisms have sections \'etale 
locally, it follows from Proposition \ref{prop:fin_gerb_alg} that there exists $k'/ 
k$ a finite separable extension so that $\mathcal G(k')\,\neq\, \emptyset$. Then 
according to Lemma \ref{lem:Weil_rest}, the canonical morphism
$ \mathcal G\,\longrightarrow\,  
\R_{k'/k} \mathcal G_{k'}$ is a representable morphism to a neutral gerbe.
\end{proof}

\begin{proposition}\label{unif_fund}
The Noetherian inflexible stack $\mathfrak X$ with finite inertia is Nori-uniformizable if and only if the
morphism $$\mathfrak X \,\longrightarrow\, \pi_{\mathfrak X/S}$$ is representable. 	
\end{proposition}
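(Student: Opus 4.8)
The plan is to prove the two implications separately, using throughout the characterisation of representability recalled in Proposition \ref{char_rep}: a morphism of stacks $f\colon\mathfrak X\to\mathcal Y$ is representable if and only if the induced homomorphism of inertia group spaces over $\mathfrak X$
$$I_{\mathfrak X/S}\longrightarrow f^{*}I_{\mathcal Y/S}$$
is a monomorphism, i.e.\ has trivial kernel; equivalently, $\Delta_{f}\colon\mathfrak X\to\mathfrak X\times_{\mathcal Y}\mathfrak X$ is a monomorphism.

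The implication ``Nori-uniformizable $\Rightarrow$ $u\colon\mathfrak X\to\pi_{\mathfrak X/S}$ representable'' is the easy one, and it needs neither the Noetherian hypothesis nor finite inertia. Starting from a representable morphism $f\colon\mathfrak X\to\mathcal G$ to a finite gerbe, I would factor it through the universal morphism $u$ as $f=g\circ u$, with $g\colon\pi_{\mathfrak X/S}\to\mathcal G$ (a finite gerbe being in particular profinite). Then $\Delta_{f}$ factors as $\mathfrak X\xrightarrow{\Delta_{u}}\mathfrak X\times_{\pi_{\mathfrak X/S}}\mathfrak X\to\mathfrak X\times_{\mathcal G}\mathfrak X$, and since $\Delta_{f}$ is a monomorphism, so is its left factor $\Delta_{u}$; hence $u$ is representable.

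For the converse, the point is to descend representability of $u$ from the pro-finite gerbe $\pi_{\mathfrak X/S}$ to a single finite quotient. I would write $\pi_{\mathfrak X/S}=\varprojlim_{i\in I}\mathcal G_{i}$ as a cofiltered limit of finite gerbes with structure maps $u_{i}\colon\mathfrak X\to\mathcal G_{i}$ induced by $u$, as provided by the construction in \cite{bv:nori_gerbe}. Since forming inertia commutes with inverse limits, $u^{*}I_{\pi_{\mathfrak X/S}/S}=\varprojlim_{i}\,u_{i}^{*}I_{\mathcal G_{i}/S}$ as group spaces over $\mathfrak X$, so the kernel $K$ of $I_{\mathfrak X/S}\to u^{*}I_{\pi_{\mathfrak X/S}/S}$ equals $\bigcap_{i}K_{i}$, where $K_{i}$ is the kernel of $I_{\mathfrak X/S}\to u_{i}^{*}I_{\mathcal G_{i}/S}$, a closed subgroup space of $I_{\mathfrak X/S}$. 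As $I$ is cofiltered and the transition maps among the $\mathcal G_{i}$ are compatible with the $u_{i}$, the family $\{K_{i}\}_{i\in I}$ is codirected. Now the hypotheses enter: because $\mathfrak X$ has finite inertia and is Noetherian, $I_{\mathfrak X/S}$ is a Noetherian algebraic space, whose closed subspaces satisfy the descending chain condition; hence the codirected family $\{K_{i}\}$ has a least element $K_{j}$, and $K=\bigcap_{i}K_{i}=K_{j}$. If $u$ is representable, then $K$, and therefore $K_{j}$, is trivial, so $I_{\mathfrak X/S}\to u_{j}^{*}I_{\mathcal G_{j}/S}$ is a monomorphism, i.e.\ $\mathfrak X\to\mathcal G_{j}$ is a representable morphism to a finite gerbe, and $\mathfrak X$ is Nori-uniformizable.

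The heart of the argument, and the main obstacle, is this last step: representability of $u$ is a priori a statement about the inverse limit $\pi_{\mathfrak X/S}$, and one must show that it is already visible at a finite level — which is exactly where finiteness of the inertia and the Noetherian hypothesis are used, to invoke the descending chain condition for the $K_{i}$. The remaining ingredients — writing $\pi_{\mathfrak X/S}$ as a cofiltered limit of finite gerbes compatibly with the maps from $\mathfrak X$, the commutation of inertia with this limit, and the equivalence of ``monomorphism'' with ``trivial kernel'' for the homomorphisms of group spaces over $\mathfrak X$ involved — are routine and can be dealt with quickly.
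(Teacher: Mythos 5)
Your proof is correct, but the second half takes a genuinely different route from the paper's. Both arguments begin identically: the ``only if'' direction follows by factoring a representable morphism to a finite gerbe through the universal morphism, and for the converse both write $\mathfrak X \longrightarrow \pi_{\mathfrak X/S}$ as the cofiltered limit over the directed set of (Nori-reduced) morphisms to finite gerbes and use the commutation $I_{\mathfrak X/\pi_{\mathfrak X/S}}\simeq \varprojlim I_{\mathfrak X/\mathcal G_i}$ to reduce to showing that triviality of the codirected family of relative inertias $K_i=I_{\mathfrak X/\mathcal G_i}$ is already achieved at some finite level. From there the paper (following Vistoli) argues by Noetherian induction on closed substacks: on each irreducible closed substack it invokes generic flatness (Proposition \ref{inert_gen_flat}) to make the relative inertia flat over a dense open, so that it acquires a well-defined order, and then runs a descending induction on that order; this detour through opens and flatness is forced by the fact, stressed in the remark following the proof, that the kernel of a morphism of finite flat group spaces need not be flat. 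You replace all of this by the descending chain condition for the codirected family $\{K_i\}$ of closed subgroup spaces of $I_{\mathfrak X/S}$, which is Noetherian because it is finite over the Noetherian stack $\mathfrak X$; this is shorter and bypasses the flatness issue entirely. Two points deserve care in the write-up. First, $I_{\mathfrak X/S}$ is a Noetherian algebraic \emph{stack} rather than an algebraic space (it is representable over $\mathfrak X$, but $\mathfrak X$ itself is a stack); this is harmless, since a Noetherian algebraic stack still satisfies the ascending chain condition for quasi-coherent ideal sheaves, hence the descending chain condition for closed substacks. Second, the chain condition you need is the scheme-theoretic one, not the topological one: in characteristic $p$ the kernels $K_i$ can be infinitesimal and all supported on the unit section, so a purely topological argument would see nothing. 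With these caveats, the minimal element of the codirected family is its least element and coincides with the intersection $\bigcap_i K_i = I_{\mathfrak X/\pi_{\mathfrak X/S}}$, and your conclusion follows.
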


\begin{proof}
The ``only if'' part is clear. Indeed, if $\mathfrak X\,\longrightarrow\,
 \mathcal G$ is a representable morphism to a finite gerbe, it factors through the
morphism $\mathfrak X\,\longrightarrow\, \pi_{\mathfrak X/S}$, that must then be representable by Proposition \ref{char_rep}\eqref{char_rep_2}.

We will now prove the ``if'' part.
The morphism $\mathfrak X \,\longrightarrow \,\pi_{\mathfrak X/S}$ is the projective limit over
the directed set $D_{\mathfrak X}$ of all Nori-reduced morphisms $\mathfrak X \,\longrightarrow\,
\mathcal G$ to a finite gerbe (see \cite{bv:nori_gerbe}, proof of Theorem 5.7). It follows by commutation of limits that for relative inertia stacks
$$I_{\mathfrak X/ \pi_{\mathfrak X/S}}\,\simeq\, \varprojlim_{\mathfrak X
\rightarrow 
 \mathcal G} I_{\mathfrak X/ \mathcal G}\; . $$
By Proposition \ref{char_rep}\eqref{char_rep_3}, the assumption is equivalent to the fact that $I_{\mathfrak X/ \pi_{\mathfrak X/S}}$ is trivial as a group space over $\mathfrak X$. 
We have to prove that there exists a Nori-reduced morphism $$f_0\,:\,\mathfrak X
\,\longrightarrow\,
\mathcal G_0$$ such that $I_{\mathfrak X/ \mathcal G_0}$ is the trivial group space. 

More generally, we can consider, for any closed sub-stack $\mathfrak X'\,\subset
\,\mathfrak X$, the issue of finding such a morphism $f_0\,:\,\mathfrak X\,\longrightarrow\,
 \mathcal G_0$
satisfying the condition that $I_{\mathfrak X'/ \mathcal G_0}$ is  trivial. We proceed by Noetherian induction, and fix a closed
sub-stack $$\mathfrak X'\,\subset\, \mathfrak X\, ,$$ assuming that the problem has a solution for any strict closed
sub-stack $\mathfrak X''\,\subset\, \mathfrak X'$. Using the fact that $D_{\mathfrak X}$ is directed, we can suppose that $\mathfrak X'$ is irreducible. The same fact
shows that it is enough to prove that there exists a non-empty open sub-stack $\mathfrak U$ of $\mathfrak X'$ for which there exists $f_0\,:\,\mathfrak X\,\longrightarrow\,
 \mathcal G_0$ such that $I_{\mathfrak U/ \mathcal G_0}$ is trivial.

Let $f_1\,:\,\mathfrak X\,\longrightarrow\,
 \mathcal G_1$ be an arbitrary element of $D_{\mathfrak X}$. By
generic flatness (see Proposition \ref{inert_gen_flat}), there exists  a non-empty
open sub-stack $\mathfrak U_1$ of $\mathfrak X'$ such that
$I_{\mathfrak U_1/ \mathcal G_1}$ is flat. Being also finite, this group has a well
defined order. If this order is not $1$, as shown below, we can produce an
element $$f_2\,:\,\mathfrak X\,\longrightarrow\,
 \mathcal G_2$$ of $D_{\mathfrak X}$ and a non-empty
open sub-stack $\mathfrak U_2$ of $\mathfrak X'$ such that
$I_{\mathfrak U_2/ \mathcal G_2}$ is flat, and $\# I_{\mathfrak U_2/ \mathcal G_2} \,<\,
\# I_{\mathfrak U_1/ \mathcal G_1}$. This completes the proof of the proposition
by induction.

To prove the above claim, assume that $I_{\mathfrak U_1/ \mathcal G_1}$ is not
trivial. Since by assumption $\varprojlim_{\mathfrak X \rightarrow
 \mathcal G} I_{\mathfrak X/ \mathcal G}$ is trivial, there exists a morphism $f_2
\,:\,\mathfrak X
\,\longrightarrow\, \mathcal G_2$ mapping to $f_1$ in $D_{\mathfrak X}$ such that the induced monomorphism $I_{\mathfrak U_1/ \mathcal G_2}\,\longrightarrow\,
 I_{\mathfrak U_1/ \mathcal G_1}$ is not an isomorphism. Let $\mathfrak U_2$ be a nonempty subset of $\mathfrak U_1$ such that $I_{\mathfrak U_2/ \mathcal G_2}$ is flat. Since the cokernel of $I_{\mathfrak U_1/ \mathcal G_2}\,\longrightarrow\,
 I_{\mathfrak U_1/ \mathcal G_1}$, namely $f_2^* I_{\mathcal G_2/ \mathcal G_1}$, is flat, it remains non-trivial after restriction to $\mathfrak U_2$. Hence
we have $\# I_{\mathfrak U_2/ \mathcal G_2} \,<\, \# I_{\mathfrak U_1/ \mathcal G_1}$.
\end{proof}

 \begin{remark}
	We used mainly two aspects: the fact that $D_{\mathfrak X}$ is directed, and the fact that when $\mathfrak X$ is Noetherian and reduced, and $\mathfrak X\,\longrightarrow\,
 \mathcal G$ is a morphism, the relative inertia stack $I_{\mathfrak X/ \mathcal G}$ is flat over a non-empty open subset $\mathfrak U$ of $\mathfrak X$. This fact can be interpreted in the following way: $\mathfrak U$ is a gerbe over its coarse sheafification $\pi_0(\mathfrak U)$ over $\mathcal G$. When $\mathcal G\,=\,
S$, this is the core of the classical result called ``stratification by gerbes". For the relative version of this result, see appendix \ref{app_strat}.

Notice, however, that the flatness of the relative inertia stack is $I_{\mathfrak 
X/ \mathcal G}$ over a non-empty open subset does not follow from the flatness of 
the absolute inertia stack $I_{\mathfrak X/ \mathcal S}$, since the kernel of a 
morphism between two finite and flat group spaces is not necessarily flat. This 
is the main difference between our situation and the one considered in 
\cite{noohi:fund_group_alg_stack}. Since the kernel of a morphism between two 
finite and \'etale group spaces is finite and \'etale, Noohi can use directly 
stratification by gerbes over $S$.
\end{remark}

\begin{corollary}\label{unif_desc}
Let $k'/k$ be a finite separable extension. Then the stack $\mathfrak X/S$ is Nori-uniformizable if and only if $\mathfrak X_{k'}$ is Nori-uniformizable.
\end{corollary}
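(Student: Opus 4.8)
The plan is to prove the ``only if'' direction by base change and the ``if'' direction by a Weil restriction argument parallel to Lemma \ref{lem:Weil_rest}. For ``only if'', if $\mathfrak X$ is Nori-uniformizable, pick a representable $k$--morphism $f\,:\,\mathfrak X\,\longrightarrow\,\mathcal G$ to a finite gerbe $\mathcal G/S$; then $f_{k'}\,:\,\mathfrak X_{k'}\,\longrightarrow\,\mathcal G_{k'}$ is again representable, since representability is stable under base change (Proposition \ref{char_rep}), and $\mathcal G_{k'}$ is a finite gerbe over $k'$, so $\mathfrak X_{k'}$ is Nori-uniformizable. Separability of $k'/k$ plays no role here.

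For ``if'', assume $\mathfrak X_{k'}$ is Nori-uniformizable, say $f'\,:\,\mathfrak X_{k'}\,\longrightarrow\,\mathcal H$ is a representable $k'$--morphism to a finite gerbe $\mathcal H/S'$. Applying Weil restriction along $k'/k$ and composing with the unit $\mathfrak X\,\longrightarrow\,\R_{k'/k}(\mathfrak X_{k'})$ yields a $k$--morphism $\widetilde f\,:\,\mathfrak X\,\longrightarrow\,\R_{k'/k}\mathcal H$, namely the morphism adjoint to $f'$. Exactly as in the proof of Lemma \ref{lem:Weil_rest}(1), $\R_{k'/k}\mathcal H$ is a finite stack by \cite[Lemma 6.2]{bv:nori_gerbe}, and after base change to a separable closure $k^{sep}$ of $k$ (which splits $k'$) it becomes a finite product of gerbes over $k^{sep}$, hence is geometrically connected and geometrically reduced; so by \cite[Proposition 4.3]{bv:nori_gerbe} the target $\R_{k'/k}\mathcal H$ is itself a finite gerbe over $k$. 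It then remains to show that $\widetilde f$ is representable, and this is the heart of the matter.

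To check representability of $\widetilde f$ I would again use that it is fppf-local on the base (as in the proof of Lemma \ref{lem:Weil_rest}(2)) and pass to $k^{sep}$. Letting $\sigma$ run over the $n\,=\,[k':k]$ embeddings $k'\,\hookrightarrow\,k^{sep}$ over $k$, and writing $\mathcal H^\sigma$ for the base change of $\mathcal H$ along $\sigma$, one has $(\R_{k'/k}\mathcal H)_{k^{sep}}\,\simeq\,\prod_\sigma \mathcal H^\sigma$, and $\widetilde f_{k^{sep}}\,:\,\mathfrak X_{k^{sep}}\,\longrightarrow\,\prod_\sigma \mathcal H^\sigma$ has the property that its composite with the projection onto the $\sigma$--factor is the base change $(f')^\sigma$ of $f'$ along $\sigma$. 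Fixing one embedding $\sigma_0$, the relative inertia $I_{\mathfrak X_{k^{sep}}/\prod_\sigma \mathcal H^\sigma}$ is a subgroup space of $I_{\mathfrak X_{k^{sep}}/\mathcal H^{\sigma_0}}$, and the latter is the base change of $I_{\mathfrak X_{k'}/\mathcal H}$ along $\sigma_0$, hence trivial because $f'$ is representable (Proposition \ref{char_rep}\eqref{char_rep_3}). Therefore $\widetilde f_{k^{sep}}$ is representable, so $\widetilde f$ is, and $\mathfrak X$ is Nori-uniformizable.

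The main obstacle is precisely this last step. Weil restriction does not visibly preserve representability of a morphism, so one cannot merely transport $f'$ across the adjunction and be finished; what makes the argument go through is that after the splitting base change $\widetilde f$ becomes a diagonal-type morphism into a product of conjugates of $\mathcal H$, so that the vanishing of the relative inertia of a single conjugate of $f'$ already forces that of $\widetilde f_{k^{sep}}$. The auxiliary facts used — that $\R_{k'/k}\mathcal H$ is a finite gerbe over $k$, and that $\widetilde f_{k^{sep}}$ is this product morphism — are routine and follow the pattern of Lemma \ref{lem:Weil_rest}. (When $\mathfrak X$ is moreover inflexible one could alternatively combine Proposition \ref{unif_fund} with the compatibility of the Nori fundamental gerbe under the finite separable base change $k'/k$, but the argument above has the advantage of not relying on such a base-change statement.)
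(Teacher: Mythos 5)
Your proof is correct, but it follows a genuinely different route from the paper's. The paper's proof is a one‑liner: by Proposition \ref{unif_fund}, Nori-uniformizability of $\mathfrak X$ is equivalent to representability of $\mathfrak X \,\longrightarrow\, \pi_{\mathfrak X/S}$, and since the fundamental gerbe commutes with finite separable base change (\cite[Proposition 6.1]{bv:nori_gerbe}) while representability is local on the base, the statement follows at once. You instead transport a uniformization of $\mathfrak X_{k'}$ down to $k$ by Weil restriction: the adjoint morphism $\widetilde f\,:\,\mathfrak X \,\longrightarrow\, \R_{k'/k}\mathcal H$ lands in a finite gerbe (the argument of Lemma \ref{lem:Weil_rest}(1) does apply verbatim to an arbitrary finite gerbe $\mathcal H$ over $k'$, not only one of the form $\mathcal G_{k'}$ — each conjugate $\mathcal H^{\sigma}$ is neutral over $k^{sep}$ by smoothness, so the split form is again $\B$ of a finite group scheme), and your inertia computation over $k^{sep}$ is sound; even more directly, $pr_{\sigma_0}\circ \widetilde f_{k^{sep}} = (f')^{\sigma_0}$ is representable, and a morphism whose composite with something is representable is itself representable by Proposition \ref{char_rep}\eqref{char_rep_2}, so no inertia bookkeeping is needed. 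What your approach buys: it bypasses Proposition \ref{unif_fund} and the base-change theorem for the fundamental gerbe entirely, hence needs neither inflexibility nor Noetherian hypotheses, and it is the natural continuation of the proof of Proposition \ref{prop:nori_unif_equiv}. What the paper's approach buys: brevity, given the machinery already in place. Two small points to make explicit in your write-up: define $\widetilde f$ directly via the adjunction $\Hom_k(\mathfrak X, \R_{k'/k}\mathcal H)\simeq \Hom_{k'}(\mathfrak X_{k'},\mathcal H)$, so that you never need $\R_{k'/k}(\mathfrak X_{k'})$ to exist as an algebraic stack; and, if you prefer a genuinely fppf cover for the descent of representability, replace $k^{sep}$ by the Galois closure of $k'/k$ (though the paper itself descends along $k^{sep}/k$ in Lemma \ref{lem:Weil_rest}(2), so this is consistent with its conventions).
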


\begin{proof}
Since being representable by algebraic spaces is a local property, this follows from
Proposition \ref{unif_fund} and \cite[Proposition 6.1]{bv:nori_gerbe} (which
asserts that the fundamental gerbe commutes with finite
separable base change). 
\end{proof}

\subsection{Nori-uniformization and residual gerbes}

The following proposition generalizes Theorem 6.2 of
\cite{noohi:fund_group_alg_stack}.

\begin{proposition}\label{unif_res}
Let $\mathfrak X/S$ be an inflexible stack with finite inertia and of finite type. Then $\mathfrak X$ is
Nori-uniformizable if and only if for any closed point $x$, the canonical morphism
$$\mathcal G_x \,\longrightarrow \,\pi_{\mathfrak X /S}$$ is representable. 
\end{proposition}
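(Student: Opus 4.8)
The plan is to deduce this from Proposition~\ref{unif_fund}, which already characterizes Nori-uniformizability by representability of the single morphism $\mathfrak X\,\longrightarrow\,\pi_{\mathfrak X/S}$. By Proposition~\ref{char_rep}\eqref{char_rep_3} the latter is equivalent to triviality of the relative inertia $I_{\mathfrak X/\pi_{\mathfrak X/S}}$ as a group space over $\mathfrak X$, while representability of each $\mathcal G_x\,\longrightarrow\,\pi_{\mathfrak X/S}$ is equivalent to triviality of $I_{\mathcal G_x/\pi_{\mathfrak X/S}}$ over $\mathcal G_x$. Since $\mathcal G_x\,\longrightarrow\,\mathfrak X$ is a monomorphism, hence isovariant (by the remark following Definition~\ref{def:pres_inertia}, or Proposition~\ref{rest_inert_stack}), we have $I_{\mathcal G_x/\pi_{\mathfrak X/S}}\,\simeq\, \bigl(I_{\mathfrak X/\pi_{\mathfrak X/S}}\bigr)_{|\mathcal G_x}$; so one direction is immediate: if $\mathfrak X$ is Nori-uniformizable then $I_{\mathfrak X/\pi_{\mathfrak X/S}}$ is trivial, hence so is each restriction, hence each $\mathcal G_x\,\longrightarrow\,\pi_{\mathfrak X/S}$ is representable.

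For the converse, I would use the Noetherian-induction machinery of the proof of Proposition~\ref{unif_fund}. Assume every $\mathcal G_x\,\longrightarrow\,\pi_{\mathfrak X/S}$ is representable. As in that proof, it suffices to show: for each irreducible closed substack $\mathfrak X'\subset\mathfrak X$ with generic point $x$, there is a Nori-reduced $f_0\,:\,\mathfrak X\,\longrightarrow\,\mathcal G_0$ and a nonempty open $\mathfrak U\subset\mathfrak X'$ with $I_{\mathfrak U/\mathcal G_0}$ trivial. The key input is that $I_{\mathcal G_x/\pi_{\mathfrak X/S}}\,\simeq\,\varprojlim I_{\mathcal G_x/\mathcal G}$ is trivial, so there is some Nori-reduced $f\,:\,\mathfrak X\,\longrightarrow\,\mathcal G$ with $I_{\mathcal G_x/\mathcal G}$ already trivial (a trivial group space is the limit of its finite-level quotients, and by cofinality and finiteness triviality is achieved at some finite stage). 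Now $I_{\mathcal G_x/\mathcal G}$ is the fiber of $I_{\mathfrak X'/\mathcal G}$ over the generic point $x$ of $\mathfrak X'$; combining with generic flatness (Proposition~\ref{inert_gen_flat}) applied to $I_{\mathfrak X'/\mathcal G}$ over $\mathfrak X'$, we obtain a nonempty open $\mathfrak U\subset\mathfrak X'$ over which $I_{\mathfrak U/\mathcal G}$ is flat, finite, and of order $1$ at the generic point, hence trivial on all of $\mathfrak U$. Taking $f_0=f$ finishes the induction step exactly as before.

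The main obstacle I anticipate is the compatibility of residual gerbes with base field extensions and with the finite stages of the inverse limit: the $\mathcal G_x$ are a priori only defined over $k(\varphi(x))$ or, more to the point here, over the residue field of the point $x$ of $\mathfrak X$, which need not be $k$. One must be careful that ``the canonical morphism $\mathcal G_x\,\longrightarrow\,\pi_{\mathfrak X/S}$ is representable'' is unambiguous and that representability can be checked after the separable base change that neutralizes things, using that representability is local on the base for the fppf topology (as in the proof of Lemma~\ref{lem:Weil_rest}\eqref{char_rep_5}); Corollary~\ref{unif_desc} reassures us that Nori-uniformizability itself is insensitive to finite separable base change. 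A second, more technical point is the identification $I_{\mathcal G_x/\mathcal G}$ with the fiber of $I_{\mathfrak X'/\mathcal G}$ at $x$: this uses that $\mathcal G_x\,\hookrightarrow\,\mathfrak X'$ is a monomorphism with residue field $k(x)$ and that relative inertia commutes with the base change $\mathcal G_x\,\longrightarrow\,\mathfrak X'$, which is the isovariance of monomorphisms again. Once these bookkeeping issues are settled, the argument is a direct transcription of the proof of Proposition~\ref{unif_fund}, with the role of ``triviality of $I_{\mathfrak X/\pi}$ at every point'' replaced by ``triviality of $I_{\mathcal G_x/\pi}$ at every closed point $x$,'' the two being equivalent by the generic-flatness stratification.
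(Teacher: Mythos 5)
Your ``only if'' direction is correct and coincides with the paper's: Nori-uniformizability gives representability of $\mathfrak X\,\longrightarrow\,\pi_{\mathfrak X/S}$ by Proposition~\ref{unif_fund}, and this restricts to each residual gerbe via the isovariance of the monomorphism $\mathcal G_x\,\longrightarrow\,\mathfrak X$ (Proposition~\ref{rest_inert_stack}). In the converse, however, there is a genuine gap at the step you call the ``key input'': you apply the hypothesis at the \emph{generic} point $x$ of the irreducible closed substack $\mathfrak X'$, whereas the hypothesis only concerns \emph{closed} points of $\mathfrak X$, and the generic point of a positive-dimensional $\mathfrak X'$ is not closed. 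The triviality of $I_{\mathcal G_x/\pi_{\mathfrak X/S}}\,\simeq\,\varprojlim I_{\mathcal G_x/\mathcal G}$ at that point is therefore not among your assumptions, and the induction step as written does not get off the ground.

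The repair is exactly the point you only gesture at in your final sentence, and it is what the paper isolates as Lemma~\ref{char_rep_closed}. First apply generic flatness (Proposition~\ref{inert_gen_flat}) to obtain a nonempty open $\mathfrak U_1\subset\mathfrak X'$ over which $I_{\mathfrak U_1/\mathcal G_1}$ is finite and flat, hence of locally constant order; since $\mathfrak X$ is of finite type over a field, closed points are dense in every locally closed substack, so $\mathfrak U_1$ contains a point $x$ that is closed in $\mathfrak X$; by flatness and irreducibility the order of $I_{\mathfrak U_1/\mathcal G_1}$ equals that of its fibre $I_{\mathcal G_x/\mathcal G_1}$ at this closed point, where the hypothesis does apply and forces the order to drop at some finer level $\mathcal G_2$. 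With that substitution your induction goes through. Note that the paper avoids re-running the induction altogether: it quotes Proposition~\ref{unif_fund} as a black box and proves separately (Lemma~\ref{char_rep_closed}) that representability of any finite-type $\mathfrak X\,\longrightarrow\,\mathfrak Y$ can be tested on residual gerbes at closed points, using $\left(I_{\mathfrak X/\mathfrak Y}\right)_{|\mathcal G_x}\simeq I_{\mathcal G_x/\mathfrak Y}$ together with Proposition~\ref{char_rep}\eqref{char_rep_3} and the density of closed points. That factorization is cleaner than threading the closed-point hypothesis through the Noetherian induction a second time, and you may want to extract the same lemma from your argument.
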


\begin{proof}
This follows from Proposition \ref{unif_fund} and Lemma \ref{char_rep_closed} below. 
\end{proof}

\begin{lemma}\label{char_rep_closed}	
Let $\mathfrak X$ be a stack of finite type over a field $k$ and 
$f\,:\, \mathfrak X\,\longrightarrow\, \mathfrak Y$ be a morphism to an algebraic stack. Then $f$ is representable if and only if for any closed point $x\in \left|\mathfrak X \right|_0$, the induced morphism $\mathcal G_x\,\longrightarrow\, \mathfrak Y$ is representable.
\end{lemma}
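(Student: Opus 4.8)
The statement is a "pointwise on closed points" criterion for representability of a morphism $f \colon \mathfrak X \to \mathfrak Y$ from a finite-type stack over $k$. The forward direction is immediate: representability is stable under composition with the monomorphisms $\mathcal G_x \hookrightarrow \mathfrak X$, so if $f$ is representable then so is each $\mathcal G_x \to \mathfrak Y$. All the content is in the converse. By (the cited) Proposition \ref{char_rep}\eqref{char_rep_3}-type characterization, $f$ is representable if and only if the relative inertia $I_{\mathfrak X/\mathfrak Y} \to \mathfrak X$ is trivial, i.e.\ is isomorphic to the trivial group space over $\mathfrak X$; equivalently, the monomorphism $I_{\mathfrak X/\mathfrak Y} \to \mathfrak X$ (sitting inside $I_{\mathfrak X/S}$) is an isomorphism. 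So I would reduce to showing: a finite-type group space $G \to \mathfrak X$ which is a subgroup of the (finite) inertia, and which restricts to the trivial group over every closed point's residual gerbe, must be the trivial group space — or more precisely, reduce the representability of $f$ to this.

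**Key steps.** First, I would set $K \colonequals I_{\mathfrak X/\mathfrak Y}$, a finite-type (in fact separated, since it's a subgroup space of inertia) group space over $\mathfrak X$, and note that the condition "$\mathcal G_x \to \mathfrak Y$ is representable" says exactly $K|_{\mathcal G_x}$ is the trivial group space over $\mathcal G_x$ for every closed point $x$. The goal becomes: $K \to \mathfrak X$ is an isomorphism (onto the identity section). Since $\mathfrak X$ is of finite type over a field, $|\mathfrak X|_0$ (closed points) is very dense — every nonempty closed substack contains a closed point. I would then argue that the locus in $|\mathfrak X|$ where the fibre of $K$ is trivial is constructible (the map $K \to \mathfrak X$ is of finite type, and triviality of the fibre — i.e.\ the fibre being a single reduced point — is a constructible condition, e.g.\ by Chevalley applied to $K \smallsetminus e$, whose image is constructible), and it contains all closed points; a constructible subset of a Jacobson (sober, finite-type over a field) space containing all closed points is everything. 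Hence $K \to \mathfrak X$ is set-theoretically the identity section, so $K$ is supported on $e(\mathfrak X)$. To upgrade from "set-theoretically trivial" to "scheme-theoretically trivial," I would combine this with the fact that $K$ is unramified (or that, working after passing to a smooth chart $U \to \mathfrak X$, $K_U \to U$ is a finite-type group scheme whose geometric fibres over all closed points of $U$ are trivial): a separated finite-type group scheme over a Jacobson base, trivial at every closed point, is trivial — here one wants to know $K$ is flat (or at least unmixed) over $\mathfrak X$ to run the same "order is locally constant on a flat locus, generic flatness" bookkeeping as in the proof of Proposition \ref{unif_fund}, i.e.\ stratify $\mathfrak X$, on each stratum $K$ is flat of some order $n$, and $n = 1$ at a closed point of the stratum forces $n = 1$ on the whole stratum.

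**Main obstacle.** The delicate point is exactly the passage from "the fibre of $I_{\mathfrak X/\mathfrak Y}$ over every residual gerbe of a closed point is trivial" to "$I_{\mathfrak X/\mathfrak Y}$ is globally trivial" — i.e.\ ruling out nilpotent or non-reduced thickening phenomena in the group space that are invisible on (reduced) residual gerbes, and ruling out embedded-component pathologies where $K$ could be nontrivial at a non-closed point but trivial at every closed point. The clean way around this is to not reduce directly to closed points of $\mathfrak X$ but to exploit that $\mathfrak X$ is Jacobson together with the constructibility of the "trivial-fibre" locus, which already forces $K \to \mathfrak X$ to be a monomorphism set-theoretically everywhere; then a local (smooth-chart) flatness-stratification argument in the style of Proposition \ref{unif_fund}, applied stratum-by-stratum, finishes it since each stratum's generic point specializes to a closed point of $\mathfrak X$ where the order is $1$. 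I would present the proof by first recording the constructibility/Jacobson reduction as a preliminary observation, then invoking the stratification-by-(relative)-gerbes machinery from the appendix referenced earlier (Appendix \ref{app_strat}) to handle the flatness and order bookkeeping, so that the lemma becomes a short deduction from Proposition \ref{unif_fund}'s method rather than a repetition of it.
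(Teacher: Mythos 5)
Your proof has the same skeleton as the paper's: via Proposition \ref{char_rep}\eqref{char_rep_3} everything reduces to showing that the relative inertia $K=I_{\mathfrak X/\mathfrak Y}$ is trivial as soon as its restriction to the residual gerbe of every closed point is trivial, and the engine is that closed points are (very) dense in a finite-type stack over a field. The paper's actual proof is two lines: it cites Proposition \ref{rest_inert_stack} to identify $I_{\mathcal G_x/\mathfrak Y}$ with $\left(I_{\mathfrak X/\mathfrak Y}\right)_{|\mathcal G_x}$, then invokes \eqref{char_rep_3} and density of closed points. Note that the identification you state as ``the condition says exactly that $K_{|\mathcal G_x}$ is trivial'' is not free: by \eqref{char_rep_3} the hypothesis at $x$ is triviality of $I_{\mathcal G_x/\mathfrak Y}$, and equating this with the restriction of $K$ is precisely the isovariance of the monomorphism $\mathcal G_x\to\mathfrak X$ over the base $\mathfrak Y$ (Proposition \ref{rest_inert_stack}); you should say so.

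Where you diverge is in how much work you assign to ``closed points are dense'', and here your route is both heavier than necessary and slightly buggy. Since $K$ is a closed subgroup space of the finite inertia, it is finite over $\mathfrak X$; writing $\pi_*\mathcal O_K=\mathcal O_{\mathfrak X}\oplus\mathcal J$ with $\mathcal J$ the (coherent) augmentation ideal, triviality of $K$ at a point is exactly $\mathcal J\otimes k(x)=0$, which by Nakayama is an \emph{open} condition. Hence the nontriviality locus is closed, a nonempty closed subset of the Jacobson space $\left|\mathfrak X\right|$ contains a closed point, and $\mathcal J=0$ globally because all its fibres vanish. This one Nakayama step simultaneously handles your set-theoretic reduction (no Chevalley needed) and the ``nilpotent thickening'' obstacle you correctly worry about, and is presumably what the paper's terse ``density'' appeal is relying on. Your alternative via generic flatness can be made to work, but as written it has two gaps: (i) the closed point of $\mathfrak X$ to which a stratum's generic point specializes need not lie in that stratum, so local constancy of the order of $K$ on the stratum does not reach it --- you must instead use that the closed points of $\mathfrak X$ contained in a nonempty locally closed stratum are dense in that stratum; (ii) triviality of $K$ over the \emph{reduced} strata does not by itself give triviality of $K$ over a possibly non-reduced $\mathfrak X$, so you end up needing a Nakayama argument anyway. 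I would replace the whole second half of your argument by the openness-of-triviality observation above.
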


\begin{proof}
By Proposition \ref{rest_inert_stack}, for any closed point $x\,:\,\spec \Omega \,
\longrightarrow \,\mathfrak X$, we have that $\left(I_{\mathfrak X/\mathfrak Y}\right)_{|{\mathcal G_x}}\simeq I_{\mathcal G_x/\mathfrak Y}$, hence the statement follows from Proposition \ref{char_rep}\eqref{char_rep_3} and the fact that the set of closed points is dense.
\end{proof}

We recall that, using the terminology of \cite{bv:nori_gerbe}, if $\mathfrak X$ is
pseudo-proper, then the pull-back along $\mathfrak X \,\longrightarrow\, \pi_{\mathfrak X/S}$ identifies representations of $\pi_{\mathfrak X/S}$ with the category $\EF(\mathfrak X)$ of essentially finite vector bundles on $\mathfrak X$. We can now state our main theorem.

\begin{theorem}
\label{char_unif}
Let $\mathfrak X/S$ be an inflexible and pseudo-proper stack with finite inertia and of finite type. Then $\mathfrak X$ is Nori-uniformizable if and only if for any closed point $x$, any representation $V$ of $\mathcal G_x$ is a subquotient of the restriction of an essentially finite vector bundle on $\mathfrak X$ along $\mathcal G_x\,\longrightarrow\, \mathfrak X$.
\end{theorem}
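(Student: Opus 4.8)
The plan is to reduce Theorem~\ref{char_unif} to Proposition~\ref{unif_res} via Tannaka duality, the key dictionary being: \emph{representability of $\mathcal G_x \to \pi_{\mathfrak X/S}$} translates, on the Tannakian side, into a surjectivity/essential-surjectivity-up-to-subquotients statement about the restriction functor $\EF(\mathfrak X) \simeq \Vect(\pi_{\mathfrak X/S}) \to \Vect(\mathcal G_x)$. First I would recall the Tannakian characterization of representable morphisms of gerbes: if $\mathcal H \to \mathcal G$ is a morphism of Tannakian gerbes, then it is representable if and only if it is an ``epimorphism'' in the appropriate sense, which dually means that the pullback functor $\Vect(\mathcal G) \to \Vect(\mathcal H)$ is \emph{fully faithful onto a subcategory closed under subquotients}; equivalently every object of $\Vect(\mathcal H)$ is a subquotient of one in the image. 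This is the standard gerbe analogue of the fact that a homomorphism of affine group schemes is a closed immersion iff the corresponding restriction of representations is essentially surjective up to subquotients. I would cite or quickly reprove this using Proposition~\ref{char_rep} together with \cite[\S3]{bv:nori_gerbe}.

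Next I would carry out the translation step by step. By Proposition~\ref{unif_res}, $\mathfrak X$ is Nori-uniformizable iff for every closed point $x$, the canonical morphism $\mathcal G_x \to \pi_{\mathfrak X/S}$ is representable. Applying the Tannakian criterion from the previous paragraph to $\mathcal H = \mathcal G_x$ and $\mathcal G = \pi_{\mathfrak X/S}$, this holds iff the pullback functor $\Vect(\pi_{\mathfrak X/S}) \to \Vect(\mathcal G_x)$ has the property that every representation of $\mathcal G_x$ is a subquotient of an object in its image. Then I invoke the pseudo-properness hypothesis and \cite[Theorem 7.9]{bv:nori_gerbe}: pullback along $\mathfrak X \to \pi_{\mathfrak X/S}$ identifies $\Vect(\pi_{\mathfrak X/S})$ with $\EF(\mathfrak X)$. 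Composing, the functor $\Vect(\pi_{\mathfrak X/S}) \to \Vect(\mathcal G_x)$ is identified with ``restrict an essentially finite vector bundle on $\mathfrak X$ along $\mathcal G_x \to \mathfrak X$''. Hence the representability condition becomes exactly: every representation $V$ of $\mathcal G_x$ is a subquotient of $\mathcal E_{|\mathcal G_x}$ for some $\mathcal E \in \EF(\mathfrak X)$, which is the statement of the theorem.

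There are two technical points I would need to handle carefully. First, the residual gerbe $\mathcal G_x$ is a gerbe over $k(x)$, not over $k$, so strictly speaking I should work with the $k(x)/k$-morphism $\mathcal G_x \to \pi_{\mathfrak X/S}$ and note that representability is insensitive to this base issue, and that $\Vect(\mathcal G_x)$ means representations over $k(x)$ — this is compatible with the formulation in Proposition~\ref{unif_res} and causes no trouble since representability can be checked after the faithfully flat base change $\spec k(x) \to \spec k$. Second, and this is where I expect the main subtlety, I must make sure the ``subquotient'' in the Tannakian criterion for representability is the correct one and matches what the theorem states. The point is that a morphism of gerbes $\mathcal G_x \to \pi_{\mathfrak X/S}$ being representable is equivalent to the fibered category $\mathcal G_x \times_{\pi_{\mathfrak X/S}} \spec \Omega$ being an algebraic space for one (equivalently any) point, which by Tannaka duality for the automorphism group schemes (cf.\ the discussion around $\BAut$ in the excerpt and Proposition~\ref{char_rep}\eqref{char_rep_3}) amounts to the homomorphism $\Aut_{\mathcal G_x} \to \Aut_{\pi_{\mathfrak X/S}}$ of band/automorphism group schemes being a monomorphism, hence a closed immersion since everything is finite; and a closed immersion of affine group schemes corresponds precisely to the restriction functor on representations being such that every representation of the subgroup is a subquotient of a restricted one (Chevalley's theorem / \cite[Proposition]{del:cat_tak}). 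I would spell this chain out, and that completes the proof. The main obstacle is thus purely bookkeeping: pinning down the precise Tannakian reformulation of ``representable'' in this gerbe setting and checking it is literally ``subquotient of a restricted object'', but no new ideas beyond Tannaka duality and the cited results are required.
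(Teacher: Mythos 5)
Your overall route is the same as the paper's: reduce to Proposition \ref{unif_res}, translate ``$\mathcal G_x \to \pi_{\mathfrak X/S}$ is representable'' via the Tannakian criterion (Proposition \ref{mor_tan}) into a subquotient statement, and then use \cite[Theorem 7.9]{bv:nori_gerbe} to replace $\Vect(\pi_{\mathfrak X/S})$ by $\EF(\mathfrak X)$. However, there is one genuine gap at the final identification. The Tannakian criterion for representability applies to a morphism of gerbes over a \emph{common} base field, so after factoring $\mathcal G_x \to \pi_{\mathfrak X/S}$ through $\pi_{\mathfrak X/S}\otimes_k k(x)$ (which is where Proposition \ref{char_rep}\eqref{char_rep_2} enters), what you obtain is: every representation of $\mathcal G_x$ is a subquotient of the restriction of a representation of $\pi_{\mathfrak X/S}\otimes_k k(x)$. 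The theorem instead asserts subquotients of restrictions of representations of $\pi_{\mathfrak X/S}$ itself (i.e.\ of essentially finite bundles on $\mathfrak X$), and a representation of $\pi_{\mathfrak X/S}\otimes_k k(x)$ need not be pulled back from $\pi_{\mathfrak X/S}$. One implication is trivial, but the other requires knowing that every representation $V$ of $\pi_{\mathfrak X/S}\otimes_k k(x)$ is a quotient of $f^*f_*V$ for $f:\pi_{\mathfrak X/S}\otimes_k k(x)\to \pi_{\mathfrak X/S}$; the paper isolates exactly this as a lemma, proved by quoting \cite[Proposition 6.2]{alper-easton:recasting} for quasi-affine morphisms (here one can also argue directly since $k(x)/k$ is finite, the counit $f^*f_*V\to V$ being the surjection $V\otimes_k k(x)\to V$). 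Your ``technical point'' about the base field only addresses why representability is insensitive to the factorization, not this descent of the subquotient condition, so the step is missing rather than merely deferred.

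A smaller imprecision: you state that representability of $\mathcal H\to\mathcal G$ is dual to $\phi^*$ being ``fully faithful onto a subcategory closed under subquotients; equivalently every object of $\Vect(\mathcal H)$ is a subquotient of one in the image.'' These two conditions are not equivalent: the first (full faithfulness plus stability of the essential image under subobjects) characterizes $\phi$ being a relative gerbe (Proposition \ref{mor_tan}(2), the analogue of a faithfully flat quotient map of group schemes), while the second is the correct criterion for representability (Proposition \ref{mor_tan}(1), the analogue of a closed immersion). Since the argument you actually run uses only the second condition, this does not derail the proof, but the conflation should be removed.
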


\textit{Proof.}~
According to Propositions \ref{unif_res} and \ref{char_rep}\eqref{char_rep_2}, the 
stack $\mathfrak X$ is Nori-uniformizable if and only if for any closed point $x$, 
the morphism $\mathcal G_x \,\longrightarrow \,\pi_{\mathfrak X/S} \bigotimes_k k(x)$ is 
representable. According to Proposition \ref{mor_tan}, this is equivalent to the fact 
that any representation $V$ of $\mathcal G_x$ is a subquotient of the restriction of 
a representation of $\pi_{\mathfrak X/S} \bigotimes_k k(x)$. Now the following lemma
completes the proof.

\begin{lemma}
Let $\mathcal G/S$ be a Tannakian gerbe, $k'/k$ an extension, and $f\,:\,
 \mathcal G_{k'}\,\longrightarrow\,
\mathcal G$ the canonical morphism. Then for any representation $V'$ of $\mathcal G_{k'}$, the canonical morphism $f^*f_* V\,\longrightarrow\,
 V$ is an epimorphism.
\end{lemma}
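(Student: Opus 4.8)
The plan is to work on the Tannakian side, where the statement becomes a concrete claim about the counit of the adjunction between pullback and pushforward along a base-change of Tannakian gerbes. First I would reduce to checking the assertion Zariski-locally, or rather fpqc-locally, on $S$: since both $f^*f_*V'$ and $V'$ are quasi-coherent sheaves on $\mathcal{G}_{k'}$ and being an epimorphism is fpqc-local, it suffices to treat the case where $\mathcal{G}$ is neutral. Indeed a Tannakian gerbe acquires a section after a faithfully flat affine base change, so I would pass to $\mathcal{G} \simeq \B\Gamma$ for an affine group scheme $\Gamma$ over some extension; then $\mathcal{G}_{k'} \simeq \B\Gamma_{k'}$, representations of $\mathcal{G}$ are $\Gamma$-representations, representations of $\mathcal{G}_{k'}$ are $\Gamma_{k'}$-representations, and $f^*$ is restriction of the group while $f_*$ is, after unwinding, the functor taking a $\Gamma_{k'}$-representation $V'$ to (a form of) its coinduction or to $(\mathrm{R}_{k'/k}\Gamma_{k'})$-induction — but in any case, for the purpose of checking that the counit $f^*f_*V' \to V'$ is surjective, what matters is that $f_*$ is right adjoint to the exact faithful functor $f^*$.

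The key point I would then isolate is the following general nonsense: if $u\colon \mathcal{C} \to \mathcal{D}$ is an exact, faithful, $k$-linear tensor functor between abelian categories that admits a right adjoint $v$, then for every object $D$ of $\mathcal{D}$ the counit $uv(D) \to D$ is an epimorphism. To see this, let $Q$ be the cokernel of the counit; then applying the exact functor $u$ is not directly what we want, so instead I would argue via faithfulness: by adjunction, for any object $C$ the map $\Hom_{\mathcal{D}}(D, uC) \to \Hom_{\mathcal{D}}(uv(D), uC) \cong \Hom_{\mathcal{C}}(v(D), v(uC))$ is injective because the unit $v(D) \to v(u v(D))$ is split by $v$ applied to the counit — actually the cleanest route is: the counit is an epimorphism if and only if $\Hom_{\mathcal{D}}(Q, -)$ detects nothing, i.e. $Q = 0$, and one shows $uQ = 0$ hence $Q = 0$ by faithfulness, provided $u$ is exact so that $uQ$ is the cokernel of $u$ applied to the counit $u v u v(D) \to u v(D)$, which is split epi by the triangle identity $\mathrm{id}_{uv(D)} = (\text{counit at } uv(D)) \circ u(\text{unit at } v(D))$. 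So $u$ of the counit is a (split) epimorphism, whence $uQ=0$, whence $Q=0$ by exactness and faithfulness of $u$.

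Concretely, in our situation $u = f^*\colon \Vect\mathcal{G} \to \Vect\mathcal{G}_{k'}$ is exact (flat base change), $k$-linear, faithful (base change along $\spec k' \to \spec k$ is faithfully flat), and monoidal, and $v = f_*$ is its right adjoint since $f$ is quasi-compact and quasi-separated with affine diagonal; one must only check that $f_*$ of a representation (vector bundle) on $\mathcal{G}_{k'}$ is again a vector bundle on $\mathcal{G}$, which is exactly the content invoked elsewhere in the paper (this is why $f_*V'$ makes sense as "$f_*V$" in the statement) and follows, for instance, from the finiteness of $k'/k$ in the application together with \cite{bv:nori_gerbe}; more generally finiteness of $f_*$ on a Tannakian gerbe is standard. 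With $f_*V'$ a genuine object of $\Vect\mathcal{G}$, the abstract lemma of the previous paragraph applies verbatim and yields that $f^*f_*V' \to V'$ is an epimorphism.

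The main obstacle is purely bookkeeping rather than conceptual: ensuring that $f_*$ actually lands in $\Vect\mathcal{G}$ (not merely in quasi-coherent sheaves) so that the counit is a morphism in the right category, and being careful that "epimorphism" is meant in $\Vect\mathcal{G}_{k'}$ in the abelian sense (surjection of the underlying sheaves), which is why the reduction to the neutral case — where everything becomes ordinary surjectivity of $\Gamma_{k'}$-equivariant maps of $k'$-vector spaces — is the safest way to close the argument. Note that the statement as printed has a harmless typo, writing $V$ for $V'$ on the left-hand side; I would read $f^*f_*V' \to V'$ throughout.
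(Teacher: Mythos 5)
Your argument is made to rest on the ``general nonsense'' claim that for an adjunction $u\dashv v$ between abelian categories with $u$ exact, faithful and monoidal, the counit $uv(D)\to D$ is always an epimorphism. That claim is false: take $f\,:\,\mathbb{P}^1_k\to\spec k$, $u=f^*$ (exact, faithful, $k$-linear, tensor) and $v=f_*=\Gamma$; for $D=\mathcal O(-1)$ the counit is $0\to\mathcal O(-1)$. The correct general statement is that the counit is a pointwise epimorphism if and only if the \emph{right} adjoint $v=f_*$ is faithful; exactness and faithfulness of the left adjoint $f^*$ control the unit, not the counit. Your manipulation of the triangle identities reflects this confusion: the cokernel $Q$ of $\varepsilon_D$ lives in $\mathcal D$, the target of $u$, so ``$uQ$'' does not typecheck, and the identity $\varepsilon_{uv(D)}\circ u(\eta_{v(D)})=\id$ only says that the counit at the special object $uv(D)$ is split, which gives no information about $\varepsilon_D$ itself. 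The usable identity is $v(\varepsilon_D)\circ\eta_{v(D)}=\id_{v(D)}$, i.e.\ $f_*(\varepsilon_D)$ is split epi; to deduce $Q=0$ from that you need $f_*$ to be exact and faithful, and this is exactly where the geometry of $f$ has to enter. Your proof never uses that $f$ is affine, which is why it cannot work as written.

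The paper's one-line proof is precisely the missing input: $f$ is affine (it is the base change of $\spec k'\to\spec k$), hence quasi-affine, and surjectivity of $f^*f_*\mathcal F\to\mathcal F$ for quasi-coherent $\mathcal F$ along quasi-affine morphisms is \cite[Proposition 6.2]{alper-easton:recasting}. The repair is close to what you gesture at in your last paragraph: either quote that criterion, or actually carry out the fpqc-local computation rather than deferring back to the abstract lemma. After base change along an affine chart neutralizing $\mathcal G$, the counit becomes the multiplication map $V'\otimes_k k'\to V'$, $v\otimes\lambda\mapsto\lambda v$, which is visibly surjective, and surjectivity descends. (Your bookkeeping worry about $f_*V'$ being a vector bundle is also beside the point: the lemma is a statement in quasi-coherent sheaves, and in the application one extracts a finite-dimensional subrepresentation of $f_*V'$ afterwards; insisting that $f_*V'\in\Vect\mathcal G$ would needlessly restrict to finite extensions $k'/k$.)
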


\begin{proof}
The morphism $f$ is affine, and in particular it is quasi-affine, and hence
the result follows (see \cite[Proposition 6.2]{alper-easton:recasting}).
\end{proof}

\appendix

\section{Representable morphisms}

We start by recalling a characterization of representable morphisms.

\begin{proposition}\label{char_rep}
	Let $f\,:\, \mathfrak X \,\longrightarrow\, \mathfrak Y$ be a morphism of $S$--stacks. The following properties are equivalent:

\begin{enumerate}
	\item The morphism $f$ is representable by algebraic spaces.

	\item \label{char_rep_2} For any section $\sigma \,:\, T\,\longrightarrow\, \mathfrak X$, 
the canonical morphism of $T$--group spaces $$\BAut_T \sigma \,\longrightarrow\, \BAut_T 
f(\sigma)$$ has trivial kernel.

	\item \label{char_rep_3} The relative inertia stack $I_{\mathfrak X/\mathfrak 
Y}\,=\,\mathfrak X\times_{\mathfrak X \times_{\mathfrak Y}\mathfrak X}\mathfrak X $ is 
trivial (as a group space over $\mathfrak X$).

	\item \label{char_rep_4} The group morphism $I_{\mathfrak X/S} \,\longrightarrow\,
f^*I_{\mathfrak Y/S}$ is a monomorphism.

	\item \label{char_rep_5} The diagonal $\Delta: \mathfrak X \,\longrightarrow\,
 \mathfrak X \times_{\mathfrak 
Y}\mathfrak X$ is a monomorphism.
\end{enumerate}
\end{proposition}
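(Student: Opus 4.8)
The plan is to route all five conditions through the diagonal morphism $\Delta\,:\,\mathfrak X\,\longrightarrow\,\mathfrak X\times_{\mathfrak Y}\mathfrak X$ of $f$, since each of them ultimately constrains exactly that morphism. The equivalence (1) $\Leftrightarrow$ (5) is the classical characterization of morphisms representable by algebraic spaces: a $1$-morphism of stacks in groupoids is representable by algebraic spaces precisely when it is faithful, and faithfulness is in turn equivalent to the diagonal $\Delta$ being a monomorphism; for both halves I would simply cite \cite{stacks-project}.

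Next I would treat (5) $\Leftrightarrow$ (3) $\Leftrightarrow$ (2) together. A morphism $g$ is a monomorphism if and only if its own diagonal is an isomorphism; applied to $g=\Delta$, the relevant fibre product is by definition $I_{\mathfrak X/\mathfrak Y}=\mathfrak X\times_{\mathfrak X\times_{\mathfrak Y}\mathfrak X}\mathfrak X$, so $\Delta$ is a monomorphism iff the structure morphism $I_{\mathfrak X/\mathfrak Y}\,\longrightarrow\,\mathfrak X$ is an isomorphism, that is, iff $I_{\mathfrak X/\mathfrak Y}$ is trivial as a group space over $\mathfrak X$; this gives (5) $\Leftrightarrow$ (3). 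For (3) $\Leftrightarrow$ (2) I would unwind this $2$-fibre product: for a section $\sigma\,:\,T\,\longrightarrow\,\mathfrak X$ corresponding to $\xi\in\mathfrak X(T)$, the pullback $\sigma^{*}I_{\mathfrak X/\mathfrak Y}$ is canonically the group space of automorphisms of $\xi$ that $f$ sends to the identity, i.e.\ the kernel of $\BAut_T\sigma\,\longrightarrow\,\BAut_T f(\sigma)$. Since a group space over $\mathfrak X$ is trivial if and only if all its pullbacks along sections are trivial --- and already one section through an fppf cover of $\mathfrak X$ suffices, by descent --- this yields the equivalence.

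Finally, for (3) $\Leftrightarrow$ (4) I would invoke the canonical left-exact sequence of group spaces over $\mathfrak X$
\[
1\,\longrightarrow\, I_{\mathfrak X/\mathfrak Y}\,\longrightarrow\, I_{\mathfrak X/S}\,\longrightarrow\, f^{*}I_{\mathfrak Y/S},
\]
whose right-hand arrow is $\alpha\mapsto f(\alpha)$ on automorphism group spaces and whose kernel is exactly $I_{\mathfrak X/\mathfrak Y}$ (here one uses $S=\spec k$, so automorphisms ``over $S$'' are all automorphisms, matching the $I_{\mathfrak X/\mathfrak Y}$ of (3)). As a homomorphism of sheaves of groups is a monomorphism precisely when its kernel is trivial, (4) holds iff $I_{\mathfrak X/\mathfrak Y}$ is trivial, i.e.\ iff (3) holds. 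I do not expect any step to be a serious obstacle; the only points needing genuine care are citing the correct form of the ``representable by algebraic spaces $=$ faithful'' statement, and the bookkeeping of $2$-fibre products in (5) $\Leftrightarrow$ (3) $\Leftrightarrow$ (2), so that the various avatars of relative inertia and of the $\BAut$ group spaces are identified compatibly.
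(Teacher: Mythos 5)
Your proposal is correct and follows essentially the same route as the paper: the paper cites the Stacks Project (Tag 04YY) for the equivalence of (1), (2), (3), derives (3) $\Leftrightarrow$ (4) from the same kernel/Cartesian-square description of $I_{\mathfrak X/\mathfrak Y}$ inside $I_{\mathfrak X/S}\to f^*I_{\mathfrak Y/S}$ that you use, and gets (3) $\Leftrightarrow$ (5) from the ``monomorphism iff diagonal is an isomorphism'' criterion exactly as you do. Your only deviation is bookkeeping --- you take (1) $\Leftrightarrow$ (5) as the primitive citation and derive (2), (3) from it rather than citing (1) $\Leftrightarrow$ (2) $\Leftrightarrow$ (3) wholesale --- which changes nothing of substance.
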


\begin{proof}
See \cite[Tag 04YY]{stacks-project} for
the equivalence of the first three statements. The fourth statement is equivalent to 
the third one, considering the following $2$--Cartesian diagram:

	\[
\xymatrix{ I_{\mathfrak X/\mathfrak Y} \ar[r]\ar[d] & I_{\mathfrak X/ S} \ar[d] \\
\mathfrak Y \ar[r] & I_{\mathfrak Y/ S}
}
\] 
The fifth statement is a reformulation of the third one; see Proposition \ref{mono_stack}.
\end{proof}

\section{Monomorphisms of algebraic stacks}

A morphism $f:\mathfrak X'\,\longrightarrow\,
 \mathfrak X$ of algebraic stacks is a monomorphism if it
is representable by a morphism of algebraic spaces that is a monomorphism (see
\cite[Tag 04ZV]{stacks-project} for details).

For convenience of the reader, we recall the following characterization.

\begin{proposition}
\label{mono_stack}
 Let $f\,:\,\mathfrak X'\,\longrightarrow\,
 \mathfrak X$ be a morphism of algebraic stacks. The following are equivalent:
\begin{enumerate}
	\item $f$ is a monomorphism,
	\item $f$ is fully faithful,
	\item the diagonal $\Delta_f\,:\, \mathfrak X'\,\longrightarrow\,
 \mathfrak X'\times _{\mathfrak X}\mathfrak X'$ is an isomorphism.
\end{enumerate}
\end{proposition}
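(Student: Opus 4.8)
The plan is to establish the two purely formal equivalences $(2)\Leftrightarrow(3)$ by an explicit computation in the fibered categories, and then to connect the geometric condition $(1)$ to the diagonal by descent along an atlas. I would deliberately avoid Proposition~\ref{char_rep}, since the proof of that proposition invokes the present statement, so using it here would be circular.

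For $(2)\Leftrightarrow(3)$, I would first unwind the fiber product: over a test scheme $T$, the objects of $\mathfrak X'\times_{\mathfrak X}\mathfrak X'$ are triples $(a,b,\phi)$ with $a,b\in\mathfrak X'(T)$ and $\phi\colon f(a)\xrightarrow{\sim}f(b)$ an isomorphism in $\mathfrak X(T)$, a morphism $(a,b,\phi)\to(a',b',\phi')$ being a pair $(\alpha,\beta)$ of arrows of $\mathfrak X'(T)$ with $\phi'\circ f(\alpha)=f(\beta)\circ\phi$, while $\Delta_f$ sends $a\mapsto(a,a,\mathrm{id}_{f(a)})$. With this description the verification is mechanical and fiberwise: the induced map on morphism sets is always injective, it is surjective precisely when $f(\alpha)=f(\beta)$ forces $\alpha=\beta$ (i.e. $f$ faithful), and $\Delta_f$ is essentially surjective precisely when every isomorphism $\phi\colon f(a)\to f(b)$ equals some $f(\beta)$ (i.e. $f$ full). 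Hence $\Delta_f$ is an equivalence if and only if $f$ is fully faithful.

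For $(1)\Rightarrow(3)$, I would recall the classical fact that a morphism $g$ of algebraic spaces is a monomorphism exactly when its diagonal $\Delta_g$ is an isomorphism. Choosing a scheme atlas $U\to\mathfrak X$, the base change $f_U\colon\mathfrak X'\times_{\mathfrak X}U\to U$ is a monomorphism of algebraic spaces, so $\Delta_{f_U}$ is an isomorphism; since the canonical identification $(\mathfrak X'\times_{\mathfrak X}U)\times_U(\mathfrak X'\times_{\mathfrak X}U)\cong(\mathfrak X'\times_{\mathfrak X}\mathfrak X')\times_{\mathfrak X}U$ exhibits $\Delta_{f_U}$ as the pullback of $\Delta_f$ along the fppf cover $U\to\mathfrak X$, fppf descent of the property of being an isomorphism shows $\Delta_f$ is an isomorphism. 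Conversely, for $(3)\Rightarrow(1)$, assume $\Delta_f$ is an isomorphism and let $T\to\mathfrak X$ be a morphism from a scheme. The diagonal of $\mathfrak X'_T:=\mathfrak X'\times_{\mathfrak X}T$ over $T$ is the base change of $\Delta_f$, hence an isomorphism, so $\mathfrak X'_T\to T$ is fully faithful by the equivalence just proved; since $T$ is a sheaf of sets, full faithfulness forces every object of $\mathfrak X'_T$ to have trivial automorphism group, whence $\mathfrak X'_T$ is a sheaf and, being an algebraic stack, an algebraic space. Thus $f$ is representable by algebraic spaces, and these same base-changed diagonals being isomorphisms show the representing morphisms are monomorphisms, giving $(1)$.

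The step I expect to be the main obstacle is the representability extracted in $(3)\Rightarrow(1)$: one must argue that an isomorphic diagonal, after base change to a scheme $T$, forces the fiber $\mathfrak X'_T$ to have no nontrivial automorphisms and hence to be an algebraic space. The remaining ingredients are either purely formal (the groupoid computation) or classical (the characterization of monomorphisms of algebraic spaces by their diagonals, together with fppf descent of isomorphisms), so real care is needed only in passing from ``the diagonal is an isomorphism'' to the geometric notion of monomorphism used in the definition.
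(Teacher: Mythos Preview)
Your argument is correct. The equivalence $(2)\Leftrightarrow(3)$ is exactly the standard groupoid computation you outline, and your treatment of $(1)\Leftrightarrow(3)$ via base change to an atlas, the characterization of monomorphisms of algebraic spaces by their diagonals, and fppf descent of isomorphisms is sound; in particular, your step deducing that $\mathfrak X'_T$ is an algebraic space from full faithfulness into the set-valued $T$ is the right way to extract representability without invoking Proposition~\ref{char_rep}.

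The contrast with the paper is simply that the paper gives no argument at all: its proof is a bare citation to \cite[Tag 04ZZ]{stacks-project}. Your write-up is thus not an alternative route so much as an unpacking of that reference. If anything, what you gain is self-containment and the explicit check that no circularity with Proposition~\ref{char_rep} is introduced; what the paper's approach buys is brevity and the authority of a standard source. Since this proposition is included in the paper only ``for convenience of the reader'', either choice is appropriate.
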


\begin{proof}
See \cite[Tag 04ZZ]{stacks-project}.
\end{proof}

\begin{proposition}
	\label{rest_inert_stack}
	Let $\mathcal S/S$ be a base stack, and let $\mathfrak X'\,\longrightarrow\,
 \mathfrak X$ be a
$\mathcal S$--monomorphism of $\mathcal S$--algebraic stacks. Then the following diagram
is $2$--Cartesian:

\[
\xymatrix{ I_{\mathfrak X'/\mathcal S} \ar[r]\ar[d] & I_{\mathfrak X/\mathcal S} \ar[d] \\
\mathfrak X' \ar[r] & \mathfrak X
}
\] 
\end{proposition}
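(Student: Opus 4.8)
The plan is to unwind the definition of the inertia stack as an iterated $2$-fibre product and then feed in the only real input, namely that a monomorphism is fully faithful, equivalently has invertible relative diagonal (Proposition \ref{mono_stack}). Recall that $I_{\mathfrak X/\mathcal S}=\mathfrak X\times_{\Delta,\,\mathfrak X\times_{\mathcal S}\mathfrak X,\,\Delta}\mathfrak X$ with $\Delta=\Delta_{\mathfrak X/\mathcal S}$, that $I_{\mathfrak X/\mathcal S}$ carries a canonical structure morphism to $\mathfrak X$, and that the assertion ``the square is $2$-Cartesian'' means precisely that the comparison morphism $I_{\mathfrak X'/\mathcal S}\to \mathfrak X'\times_{f,\mathfrak X}I_{\mathfrak X/\mathcal S}$ induced by functoriality of inertia is an equivalence.

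First I would compute the right-hand side by the pasting lemma for $2$-fibre products. Pulling back the structure morphism $I_{\mathfrak X/\mathcal S}\to\mathfrak X$ along $f$ gives $\mathfrak X'\times_{f,\mathfrak X}I_{\mathfrak X/\mathcal S}\simeq\mathfrak X'\times_{(f,f),\,\mathfrak X\times_{\mathcal S}\mathfrak X,\,\Delta}\mathfrak X$. Writing $(f,f)=(f\times_{\mathcal S}f)\circ\Delta_{\mathfrak X'/\mathcal S}$ and pasting once more, together with the elementary identity $(\mathfrak X'\times_{\mathcal S}\mathfrak X')\times_{f\times f,\,\mathfrak X\times_{\mathcal S}\mathfrak X,\,\Delta}\mathfrak X\simeq\mathfrak X'\times_{f,\mathfrak X,f}\mathfrak X'$ (immediate on $T$-points), this becomes $\mathfrak X'\times_{\Delta_{\mathfrak X'/\mathcal S},\,\mathfrak X'\times_{\mathcal S}\mathfrak X',\,q}(\mathfrak X'\times_{\mathfrak X}\mathfrak X')$, where $q\colon\mathfrak X'\times_{\mathfrak X}\mathfrak X'\to\mathfrak X'\times_{\mathcal S}\mathfrak X'$ is the canonical morphism. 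Now I would invoke the hypothesis: by Proposition \ref{mono_stack} the relative diagonal $\Delta_f\colon\mathfrak X'\to\mathfrak X'\times_{\mathfrak X}\mathfrak X'$ is an isomorphism, and it satisfies $q\circ\Delta_f=\Delta_{\mathfrak X'/\mathcal S}$; replacing the pair $(\mathfrak X'\times_{\mathfrak X}\mathfrak X',q)$ by the isomorphic pair $(\mathfrak X',\Delta_{\mathfrak X'/\mathcal S})$ turns the last fibre product into $\mathfrak X'\times_{\Delta_{\mathfrak X'/\mathcal S},\,\mathfrak X'\times_{\mathcal S}\mathfrak X',\,\Delta_{\mathfrak X'/\mathcal S}}\mathfrak X'=I_{\mathfrak X'/\mathcal S}$. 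It then remains to verify that the composite of these canonical equivalences is inverse to the comparison morphism, which is a diagram chase through the definitions.

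Equivalently, and perhaps more transparently, one can argue on objects: over a scheme $T$, an object of $I_{\mathfrak X'/\mathcal S}$ is a pair $(\xi',\alpha)$ with $\xi'\in\mathfrak X'(T)$ and $\alpha\in\Aut_T(\xi')$, an object of $\mathfrak X'\times_{f,\mathfrak X}I_{\mathfrak X/\mathcal S}$ is, up to isomorphism, a pair $(\xi',\beta)$ with $\beta\in\Aut_T(f\xi')$, and the comparison functor is $(\xi',\alpha)\mapsto(\xi',f(\alpha))$; this is an equivalence precisely because $\Aut_T(\xi')\to\Aut_T(f\xi')$ is bijective, i.e.\ because $f$ is fully faithful. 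Either way, the mathematical content is entirely Proposition \ref{mono_stack}; I expect the only friction to be the bookkeeping of the canonical identifications of $2$-fibre products in the first approach, which is routine but must be done carefully so that the resulting equivalence is recognizably the natural comparison map.
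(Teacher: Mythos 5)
Your proof is correct, but it takes a genuinely different route from the paper's. The paper proves nothing from scratch: it quotes the absolute case ($\mathcal S=S$) from the Stacks Project (Tag 06R5) and deduces the relative case from the $2$--Cartesian square identifying $I_{\mathfrak X/\mathcal S}$ with the pullback of the unit section $\mathcal S\,\longrightarrow\, I_{\mathcal S/S}$ along $I_{\mathfrak X/S}\,\longrightarrow\, I_{\mathcal S/S}$; applying this to both $\mathfrak X'$ and $\mathfrak X$ and pasting with the absolute statement gives the relative one. You instead argue directly from Proposition \ref{mono_stack}, either by unwinding $I_{\mathfrak X/\mathcal S}$ as an iterated $2$--fibre product and substituting the isomorphism $\Delta_f\colon \mathfrak X'\,\longrightarrow\,\mathfrak X'\times_{\mathfrak X}\mathfrak X'$, or on $T$--points via full faithfulness; both versions are sound, and the second is the most transparent way to see the content. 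One small point of care in the $T$--points version: an object of $I_{\mathfrak X'/\mathcal S}(T)$ is a pair $(\xi',\alpha)$ with $\alpha$ an automorphism of $\xi'$ lying over the \emph{identity} in $\mathcal S(T)$, not an arbitrary automorphism; but since $f$ is an $\mathcal S$--morphism, the bijection $\Aut_T(\xi')\,\longrightarrow\,\Aut_T(f\xi')$ furnished by full faithfulness commutes with the maps to automorphisms in $\mathcal S$, hence restricts to a bijection of the relative automorphism groups, and your argument goes through unchanged. What your approach buys is a self-contained proof that treats the relative case head-on; what the paper's buys is brevity, at the price of outsourcing the absolute case to a citation.
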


\begin{proof}
	This follows from the absolute statement ($\mathcal S\,=\,S$,
\cite[Tag 06R5]{stacks-project}) and the following $2$--Cartesian diagram: 
	\[
\xymatrix{ I_{\mathfrak X/\mathcal S} \ar[r]\ar[d] & I_{\mathfrak X/ S} \ar[d] \\
\mathcal S \ar[r] & I_{\mathcal S/ S}
}
\] 
\end{proof}

\begin{remark}
With the terminology introduced in Definition \ref{def:pres_inertia}, Proposition \ref{rest_inert_stack} means exactly that monomorphisms are isovariant.	
\end{remark}

\section{Stratification by gerbes over a base stack}
\label{app_strat}

Let $f\,:\,\mathfrak X\,\longrightarrow\,
 \mathfrak Y$ be a morphism of algebraic stacks over some base $S$. We assume for simplicity that the diagonal $\Delta_f \,:\, \mathfrak X \,\longrightarrow\,
 \mathfrak X \times _{\mathfrak Y} \mathfrak X$ is quasi-compact (equivalently, it is of finite type). Then the relative inertia stack $I_{\mathfrak X /\mathfrak Y}\,\longrightarrow\,
 \mathfrak X$ is a group space of finite type and, if we further assume that $\mathfrak X$ is
Noetherian and reduced, then we can apply the classical generic flatness
theorem, \cite[Th\'eor\`eme 6.9.1]{EGAIV2}, to get the following.

\begin{proposition}
	\label{inert_gen_flat}
	Let $f\,:\,\mathfrak X\,\longrightarrow\,
 \mathfrak Y$ be a morphism of algebraic stacks with quasi-compact diagonal, and assume that $\mathfrak X$ is Noetherian and reduced. Then there exists a dense open sub-stack $\mathcal U\subset \mathcal X$ such that $I_{\mathfrak U /\mathfrak Y}\,\longrightarrow\,
 \mathfrak U$ is flat.
\end{proposition}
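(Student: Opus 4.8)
The plan is to reduce Proposition~\ref{inert_gen_flat} to the classical generic flatness theorem of \cite[Th\'eor\`eme 6.9.1]{EGAIV2} by descending to a smooth presentation. First I would record the structural input: since $\Delta_f$ is quasi-compact (equivalently, of finite type, as $\mathfrak X$ is Noetherian), the relative inertia $I_{\mathfrak X/\mathfrak Y} \,=\, \mathfrak X \times_{\mathfrak X \times_{\mathfrak Y} \mathfrak X} \mathfrak X$ is a group algebraic space of finite type over $\mathfrak X$; this is exactly the hypothesis under which ``flatness of $I_{\mathfrak U/\mathfrak Y} \to \mathfrak U$'' is a meaningful and open-able condition.

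Next I would pass to an atlas. Choose a smooth surjective morphism $u\,:\,U\,\longrightarrow\,\mathfrak X$ from a scheme (or algebraic space) $U$; since $\mathfrak X$ is Noetherian we may take $U$ Noetherian, and since $\mathfrak X$ is reduced and $u$ is smooth, $U$ is reduced as well. Form the base change $I_{\mathfrak X/\mathfrak Y}\times_{\mathfrak X} U \,\longrightarrow\, U$, which is a finite-type algebraic space over the reduced Noetherian scheme $U$. Generic flatness \cite[Th\'eor\`eme 6.9.1]{EGAIV2} (in the algebraic-space formulation, which follows by working on a scheme chart of this algebraic space) yields a dense open $V \subset U$ over which this pullback is flat. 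The key technical point is then descent of the open locus: the flat locus of a finite-type morphism is open and stable under smooth (indeed flat) base change, so the flatness locus in $U$ is the preimage of a well-defined subset $|\mathfrak U| \subset |\mathfrak X|$; because $u$ is open (smooth morphisms are open) and surjective, $|\mathfrak U|$ is open and dense in $|\mathfrak X|$. Let $\mathfrak U$ be the corresponding open substack. Finally, flatness can be checked smooth-locally on the target, so $I_{\mathfrak U/\mathfrak Y}\,\longrightarrow\,\mathfrak U$ is flat because its pullback along $V \to \mathfrak U$ is.

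The main obstacle I anticipate is purely bookkeeping rather than conceptual: ensuring that the classical generic flatness statement, which is usually phrased for a finite-type morphism of schemes, applies verbatim to the algebraic space $I_{\mathfrak X/\mathfrak Y}\times_{\mathfrak X} U \to U$, and that the resulting open locus genuinely descends. For the first issue one reduces to schemes by choosing an \'etale chart of the algebraic space $I_{\mathfrak X/\mathfrak Y}\times_{\mathfrak X} U$ and invoking that flatness over $U$ can be tested on such a chart; for the second, one uses that ``being flat'' for a finite-type morphism is an open condition on the source satisfying fppf-local-on-base descent, so its complement in $|\mathfrak X|$ is a closed subset, nowhere dense because generic points of $\mathfrak X$ (finitely many, $\mathfrak X$ being Noetherian) lift to generic points of $U$ lying in $V$. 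This last density check is where reducedness of $\mathfrak X$ is used, exactly as in the scheme case.
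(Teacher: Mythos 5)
Your argument is correct and follows essentially the same route as the paper, whose proof simply cites the absolute version in the Stacks Project together with the classical generic flatness theorem of EGA; you have written out explicitly the reduction to a smooth atlas that this citation encapsulates. The only step worth tightening is the descent of the open locus: rather than speaking of "the flat locus being the preimage of a well-defined subset," take the maximal open $V_{\max}\subset U$ over which the pulled-back inertia is flat and verify it is invariant under the smooth groupoid $U\times_{\mathfrak X}U\rightrightarrows U$ (using that flatness descends along the flat, surjective-onto-open-images projections), which is routine and yields the desired dense open substack $\mathfrak U$.
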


\begin{proof}
See \cite[Tag 06RC]{stacks-project}, for the absolute version.
\end{proof}

The flatness of the inertia stack has a standard interpretation. We start by 
giving the definition of an ``absolute'' gerbe in this relative setting.

\begin{definition}
Let $f\,:\,\mathfrak X \,\longrightarrow\,
 \mathfrak Y$ be a morphism of algebraic stacks. We say
that \emph{$\mathfrak X$ is a gerbe in $\mathfrak Y$--stacks}
if there exists a factorization $\mathfrak X \,\longrightarrow\,
\mathfrak Z \,\longrightarrow\,\mathfrak Y$ of
$f$ such that $\mathfrak X \,\longrightarrow\,
\mathfrak Z$ is a gerbe, and $\mathfrak Z \,\longrightarrow\,
\mathfrak Y$ is representable by algebraic spaces.  
\end{definition}

\begin{remark}\mbox{}
	\begin{enumerate}
\item This definition is the direct generalization of the absolute version given
in \cite[Tag 06QC]{stacks-project}.

\item The condition that $\mathfrak X \,\longrightarrow\,
 \mathfrak Z$ is a gerbe
roughly means that $\mathfrak X$ is a gerbe if we endow $\mathfrak Z$ from the topology inherited from the base
$S$; see \cite[Tag 06P2]{stacks-project} for details.

\item The stack $\mathfrak Z$, if it exists, is
unique, and it is obtained by sheafifying, over $\mathfrak Y$ endowed with its topology inherited from the base $S$, the presheaf $U \,\longmapsto\, \Ob(\mathcal{X}_U)/\!\!\cong$ (see \cite[Tag 06QD]{stacks-project}).
\end{enumerate}
\end{remark}

\begin{proposition}
The stack $\mathfrak X$ is a gerbe in $\mathfrak Y$--stacks if and only if 
$I_{\mathfrak X /\mathfrak Y}\,\longrightarrow\,
\mathfrak X$ is flat and locally of finite presentation.
\end{proposition}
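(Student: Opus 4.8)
The plan is to prove both implications by reducing to the absolute case via a suitable base change, using the characterization of gerbes recorded in \cite[Tag 06QC]{stacks-project} together with the relative inertia diagram appearing in Proposition \ref{rest_inert_stack}. First I would treat the ``only if'' direction: suppose $\mathfrak X$ is a gerbe in $\mathfrak Y$--stacks, so there is a factorization $\mathfrak X \longrightarrow \mathfrak Z \longrightarrow \mathfrak Y$ with $\mathfrak X \longrightarrow \mathfrak Z$ a gerbe and $\mathfrak Z \longrightarrow \mathfrak Y$ representable by algebraic spaces. Since $\mathfrak Z \longrightarrow \mathfrak Y$ is representable, it has trivial relative inertia by Proposition \ref{char_rep}\eqref{char_rep_3}, and hence $I_{\mathfrak X/\mathfrak Y} \simeq I_{\mathfrak X/\mathfrak Z}$ (compare the two pullback squares expressing relative inertia as a fiber product of diagonals). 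Thus it suffices to know that $I_{\mathfrak X/\mathfrak Z} \longrightarrow \mathfrak X$ is flat and locally of finite presentation whenever $\mathfrak X \longrightarrow \mathfrak Z$ is a gerbe; this is precisely the content of the absolute statement \cite[Tag 06QC]{stacks-project} applied over the base $\mathfrak Z$ (the notion of gerbe there is relative to the topology on $\mathfrak Z$ inherited from $S$, which is exactly what enters our definition).

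For the ``if'' direction, assume $I_{\mathfrak X/\mathfrak Y} \longrightarrow \mathfrak X$ is flat and locally of finite presentation. Here the natural candidate for $\mathfrak Z$ is the sheafification, over $\mathfrak Y$ endowed with its inherited topology from $S$, of the presheaf $U \longmapsto \Ob(\mathcal X_U)/\!\!\cong$, as indicated in the remark following the definition. One then has a canonical factorization $\mathfrak X \longrightarrow \mathfrak Z \longrightarrow \mathfrak Y$. To conclude, I would verify two things: that $\mathfrak Z \longrightarrow \mathfrak Y$ is representable by algebraic spaces, and that $\mathfrak X \longrightarrow \mathfrak Z$ is a gerbe. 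The second is again the absolute criterion \cite[Tag 06QC]{stacks-project} over the base $\mathfrak Z$: once $\mathfrak Z$ is constructed, $I_{\mathfrak X/\mathfrak Z} \longrightarrow \mathfrak X$ agrees with $I_{\mathfrak X/\mathfrak Y} \longrightarrow \mathfrak X$ (since $I_{\mathfrak Z/\mathfrak Y}$ is trivial by construction, $\mathfrak Z$ being a sheaf relative to the appropriate topology), so the flatness and finite presentation hypothesis transports directly, and the absolute theorem yields that $\mathfrak X \longrightarrow \mathfrak Z$ is a gerbe.

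The representability of $\mathfrak Z \longrightarrow \mathfrak Y$ is where the real work lies, and I expect it to be the main obstacle. The point is that $\mathfrak Z$, being the fppf sheafification of $U \longmapsto \Ob(\mathcal X_U)/\!\!\cong$ over $\mathfrak Y$, has by construction trivial inertia relative to $\mathfrak Y$, so by Proposition \ref{char_rep}\eqref{char_rep_3} it is representable by algebraic spaces once we know it is an algebraic stack at all; the genuine issue is therefore the algebraicity of $\mathfrak Z$ over $\mathfrak Y$. This should follow from the absolute result \cite[Tag 06QC, Tag 06QD]{stacks-project}, which constructs such a coarse sheafification and proves it algebraic, applied after base change along a smooth presentation of $\mathfrak Y$: given a scheme (or algebraic space) $V \longrightarrow \mathfrak Y$ that is smooth and surjective, the base change $\mathfrak X \times_{\mathfrak Y} V \longrightarrow V$ is a morphism of algebraic stacks to an algebraic space whose relative inertia is still flat and locally of finite presentation (inertia commutes with base change), so the absolute stratification-by-gerbes result provides the coarse space $Z_V$ as an algebraic space over $V$; descent along $V \longrightarrow \mathfrak Y$ then glues these into $\mathfrak Z \longrightarrow \mathfrak Y$, and compatibility of sheafification with flat base change identifies the result with our $\mathfrak Z$. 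The descent bookkeeping — checking the cocycle condition on $V \times_{\mathfrak Y} V$ and that the local pieces $Z_V$ really are the restrictions of the globally-defined sheafification — is the routine-but-delicate part, and I would cite \cite[Tag 06QC]{stacks-project} for the local input rather than reproving it.
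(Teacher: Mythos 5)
Your argument is correct and is essentially the approach the paper takes: the paper's own proof consists of nothing more than a citation of the absolute statement (\cite[Tag 06QJ]{stacks-project} --- note that Tag 06QC is only the definition), leaving the relativization implicit, and your reduction via a smooth presentation $V\longrightarrow\mathfrak Y$, the absolute criterion applied to $\mathfrak X\times_{\mathfrak Y}V$ (whose relative inertia over $V$ coincides with its absolute inertia since $V$ is an algebraic space), and descent of the coarse sheafifications $Z_V$ is exactly the intended way to fill that in. No substantive changes needed.
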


\begin{proof}
See \cite[Tag 06QJ]{stacks-project}, for the absolute version.
\end{proof}

From Proposition \ref{inert_gen_flat} we have the following:

\begin{theorem}
Let $f\,:\,\mathfrak X\,\longrightarrow\,
\mathfrak Y$ be a morphism of algebraic stacks with
quasi-compact diagonal, and assume that $\mathfrak X$ is Noetherian. Then there exists a finite decomposition $\mathfrak X\,=\,\coprod_{i\in I}\mathfrak X_i$ of $\mathfrak X$ by locally closed sub-stacks such that, for all $i\in I$, the stack $\mathfrak X_i$, endowed with the reduced structure, is a gerbe in $\mathfrak Y$--stacks.	
\end{theorem}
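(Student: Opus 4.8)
The plan is to run a Noetherian induction on closed substacks, using generic flatness of the relative inertia (Proposition \ref{inert_gen_flat}) together with the criterion that $\mathfrak X$ is a gerbe in $\mathfrak Y$--stacks precisely when $I_{\mathfrak X/\mathfrak Y}\to\mathfrak X$ is flat and locally of finite presentation: at each stage one peels off a dense open substack which is a gerbe in $\mathfrak Y$--stacks, and recurses on the closed complement.

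First I would reduce to the case that $\mathfrak X$ is reduced. Replacing $\mathfrak X$ by $\mathfrak X_{\mathrm{red}}$ changes neither the underlying topological space nor the hypothesis that $\Delta_f$ is quasi-compact (the immersion $\mathfrak X_{\mathrm{red}}\hookrightarrow\mathfrak X$ is a monomorphism, cf. Proposition \ref{mono_stack}, and composing with a monomorphism preserves quasi-compactness of the diagonal, by the standard factorization of the diagonal of a composite), and a reduced locally closed substack of $\mathfrak X_{\mathrm{red}}$ coincides with the corresponding reduced locally closed substack of $\mathfrak X$. Since $\lvert\mathfrak X\rvert$ is a Noetherian topological space, the poset of reduced closed substacks satisfies the descending chain condition, so I may argue by Noetherian induction: assuming the theorem holds for every proper reduced closed substack of $\mathfrak X$ (viewed as a stack over $\mathfrak Y$), I prove it for $\mathfrak X$, with the empty stack handled by the empty decomposition.

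For the inductive step, apply Proposition \ref{inert_gen_flat} to $f\colon\mathfrak X\to\mathfrak Y$ to obtain a dense open substack $\mathcal U\subset\mathfrak X$ with $I_{\mathcal U/\mathfrak Y}\to\mathcal U$ flat. Because $I_{\mathfrak X/\mathfrak Y}\to\mathfrak X$ is a base change of $\Delta_f$, it is of finite type, hence—$\mathfrak X$ being Noetherian—locally of finite presentation; restricting to $\mathcal U$, the map $I_{\mathcal U/\mathfrak Y}\to\mathcal U$ is flat and locally of finite presentation, so by the criterion recalled above (the relative form of \cite[Tag 06QJ]{stacks-project}) the reduced open substack $\mathcal U$ is a gerbe in $\mathfrak Y$--stacks. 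Let $\mathfrak Z\subset\mathfrak X$ be the reduced closed complement of $\mathcal U$; it is a proper, reduced, Noetherian closed substack, and the composite $\mathfrak Z\to\mathfrak X\to\mathfrak Y$ has quasi-compact diagonal, since $\Delta_{\mathfrak Z/\mathfrak Y}$ factors as $\mathfrak Z\xrightarrow{\ \Delta\ }\mathfrak Z\times_{\mathfrak X}\mathfrak Z\longrightarrow\mathfrak Z\times_{\mathfrak Y}\mathfrak Z$, where the first arrow is an isomorphism (the immersion $\mathfrak Z\hookrightarrow\mathfrak X$ being a monomorphism) and the second is a base change of $\Delta_f$. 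The inductive hypothesis gives a finite decomposition $\mathfrak Z=\coprod_{j}\mathfrak Z_j$ into reduced locally closed substacks that are gerbes in $\mathfrak Y$--stacks; since $\mathcal U$ is open and each $\mathfrak Z_j$ is locally closed in the closed substack $\mathfrak Z$, hence locally closed in $\mathfrak X$, the decomposition $\mathfrak X=\mathcal U\sqcup\coprod_j\mathfrak Z_j$ is of the required form.

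The main obstacle—really the only point beyond routine bookkeeping—is ensuring the hypotheses propagate down the induction: one must verify that a reduced closed substack of $\mathfrak X$ still maps to $\mathfrak Y$ with quasi-compact (equivalently finite-type) diagonal, which is exactly what the factorization-of-the-diagonal argument supplies, and that the finite-type diagonal over the Noetherian base upgrades to a locally-of-finite-presentation relative inertia, so that the gerbe criterion applies to $\mathcal U$. Everything else is the usual ``remove a dense flat open and recurse'' scheme, which terminates because $\lvert\mathfrak X\rvert$ is Noetherian.
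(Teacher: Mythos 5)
Your proof is correct and is essentially the argument the paper intends: the paper derives the theorem directly from Proposition \ref{inert_gen_flat} via the standard Noetherian induction (peel off a dense open where the relative inertia is flat, invoke the relative version of \cite[Tag 06QJ]{stacks-project}, and recurse on the reduced closed complement). Your extra care in checking that quasi-compactness of the diagonal passes to closed substacks over $\mathfrak Y$ and that finite type upgrades to finite presentation over a Noetherian base fills in details the paper leaves implicit, but does not change the route.
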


\begin{remark}
Our formulation of the statement, based on the classical generic flatness
theorem (see \cite[Th\'eor\`eme 6.9.1]{EGAIV2}) is rather restrictive, even if it is more than enough for our purposes (in fact we only need Proposition \ref{inert_gen_flat}). For a more general version, based on a more powerful generic flatness theorem, see \cite[Tag 06RF]{stacks-project}.
\end{remark}

\section{Morphisms of Tannakian gerbes}

The following proposition is well known for \emph{neutral} gerbes (see
\cite[II 4.3.2]{rivano:cat_tan}); it is included here due to the lack of
a reference for the more general statement.

\begin{proposition}
	\label{mor_tan}
 Let $\phi \,:\, \mathcal G \,\longrightarrow \,\mathcal G'$ be a morphism between Tannakian
gerbes, and let $\phi ^*\,:\, \Vect \mathcal G' \,\longrightarrow\, \Vect \mathcal G$ be
the corresponding Tannakian functor.
 \begin{enumerate}
\item The morphism $\phi$ is representable if and only if any object of $\Vect 
\mathcal G$ is a subquotient of the image by $\phi ^*$ of an object of $\Vect 
\mathcal G'$.
	
\item The morphism $\phi$ is a (relative) gerbe if and only the functor $\phi ^*$ 
is fully faithful, and the essential image of $\phi^*$ is stable by subobject.
\end{enumerate}
\end{proposition}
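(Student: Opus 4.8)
The plan is to reduce both equivalences to the case of \emph{neutral} gerbes, where they are classical, and then to propagate them by faithfully flat descent along the base.

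First I would record the neutral dictionary. Suppose $\mathcal G\,\simeq\,\B H$ and $\mathcal G'\,\simeq\,\B G$ for affine group schemes $H$ and $G$, and that $\phi$ is induced by a homomorphism $u\,:\,H\,\longrightarrow\, G$; then $\phi^*\,:\,\Vect\mathcal G'\,\longrightarrow\,\Vect\mathcal G$ is restriction of representations along $u$. On one hand, $\phi$ is representable if and only if $u$ is a monomorphism: the fibre of $\B H\,\longrightarrow\,\B G$ over the trivial torsor is the quotient $[G/H]$ for $H$ acting on $G$ through $u$ by translations, which is an algebraic space exactly when this action is free, i.e. when $u$ is injective. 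On the other hand, $\phi$ is a gerbe if and only if $u$ is faithfully flat: a gerbe is an epimorphism, forcing $u$ surjective, and conversely if $u$ is faithfully flat then $\phi$ is surjective and any two $H$-torsors whose induced $G$-torsors are isomorphic become trivial fppf-locally, hence are locally isomorphic. Under this dictionary, the two asserted equivalences are exactly the classical statement for affine group schemes: $u$ is a monomorphism iff every representation of $H$ is a subquotient of the restriction of a representation of $G$, and $u$ is faithfully flat iff $\phi^*$ is fully faithful with essential image stable under subobjects (see \cite[II 4.3.2]{rivano:cat_tan}).

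Next comes the descent. Because $\mathcal G\,\longrightarrow\, S$ and $\mathcal G'\,\longrightarrow\, S$ are fpqc gerbes, there is an fpqc cover $S'\,\longrightarrow\, S$ over which $\mathcal G_{S'}$ and $\mathcal G'_{S'}$ are both neutral; refining $S'$ further, we may assume that the $G$-torsor on $S'$ obtained by composing a section of $\mathcal G_{S'}$ with $\phi_{S'}$ is trivial, so that $\phi_{S'}$ is induced by a homomorphism as above. The geometric conditions are fpqc-local on $S$: representability is, by Proposition \ref{char_rep}\eqref{char_rep_5}, the assertion that $\Delta_\phi$ is a monomorphism, a property local for the fppf topology (\cite[Tag 02YK]{stacks-project}) and stable under the base change $\Delta_\phi\,\mapsto\,\Delta_{\phi_{S'}}$; and being a gerbe is fpqc-local on the target, hence on $S$. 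For the Tannakian conditions, the pullback functor $\Vect\mathcal G\,\longrightarrow\,\Vect\mathcal G_{S'}$ is faithful, exact and conservative, and identifies $\Vect\mathcal G_{S'}$ with the base change of $\Vect\mathcal G$ (this is where one uses that a Tannakian gerbe has an affine chart and affine diagonal); consequently full faithfulness of $\phi^*$, stability of its essential image under subobjects, and the property that every object is a subquotient of an object in the image are each stable under base change to $S'$ and, conversely, descend along $S'\,\longrightarrow\, S$. Assembling these reductions with the neutral case yields the proposition.

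The step I expect to require the most care is the behaviour of $\Vect(-)$ under base change of the base $S$, together with the verification that the three categorical conditions can genuinely be tested fpqc-locally; the remaining ingredients — the identification of the fibre of $\B H\,\longrightarrow\,\B G$ with $[G/H]$, the epimorphism argument characterizing relative gerbes, and the fppf-locality of monomorphisms — are routine.
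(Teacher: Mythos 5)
Your reduction strategy (neutralize over an fpqc cover, quote the classical group-scheme statement \cite[II 4.3.2]{rivano:cat_tan}, descend) is genuinely different from the paper's argument, and its geometric half is sound: representability and the gerbe property are indeed fpqc-local on $S$ via Proposition \ref{char_rep}, and your neutral dictionary (the fibre $[G/H]$, freeness of the translation action, faithful flatness of $u$) is correct. But the step you defer to the end --- that the three categorical conditions ``are each stable under base change to $S'$ and, conversely, descend along $S'\,\longrightarrow\, S$'' --- is not a verification to be filled in later; it is the entire mathematical content of the proposition, and as stated it has a genuine hole. A Tannakian gerbe is only guaranteed an \emph{affine} chart, so the neutralizing cover can in general only be taken to be $\spec K$ for an arbitrary (possibly infinite, possibly transcendental) field extension $K/k$; it need not be finite or even finitely generated over $k$. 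Descending ``every object of $\Vect\mathcal G_K$ is a subquotient of an object in the image of $\phi_K^*$'' back to $k$ then requires a limit/spreading-out argument (reduce to a finitely generated subextension, spread the witness $W'$ and the subquotient data over a $k$-variety with that function field, specialize at a closed point, and only then use the finite pushforward $g_*$ along $\mathcal G'_{k(t)}\,\longrightarrow\,\mathcal G'$); the converse direction needs the separate fact that every object of $\Vect\mathcal G_K$ is a quotient of $g^*V$ for some vector bundle $V\subset g_*V'$, which uses that $g$ is affine and that $g_*V'$ is a filtered union of coherent subobjects. None of this is routine bookkeeping, and the analogous descent for full faithfulness and for stability of the essential image under subobjects in part (2) has the same difficulty. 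If instead you take $S'$ to be a general affine fpqc cover rather than a field, you face the worse problem that ``$\Vect\mathcal G_{S'}$ is the base change of $\Vect\mathcal G$'' and the applicability of \cite[II 4.3.2]{rivano:cat_tan} are no longer covered by the standard theory, which is developed over fields.

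For comparison, the paper sidesteps exactly this issue by working with bands: \cite[III 3.3.3]{rivano:cat_tan} is precisely the \emph{non-neutral} form of the statement you are trying to re-derive, characterizing the subquotient condition by injectivity of $\bd(\phi)\,:\,\bd(\mathcal G)\,\longrightarrow\,\bd(\mathcal G')$. Injectivity of a band morphism is by definition a local condition, so the only thing that has to be localized is a purely group-theoretic statement (a monomorphism of inertia groups), which then plugs directly into Proposition \ref{char_rep}\eqref{char_rep_4}; no descent of categorical properties of $\Vect(-)$ is ever needed. If you want to keep your neutralize-and-descend architecture, you must either supply the spreading-out arguments above in full, or restrict to situations (e.g.\ finite gerbes) where the neutralizing extension can be taken finite so that $g_*$ preserves vector bundles and the descent becomes the easy pushforward argument.
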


\begin{proof}\mbox{}
\begin{enumerate}
\item We recall that given a base $S$, there is a canonical morphism $\bd \,:\, \Gr_S\,\longrightarrow\,
 \Bd_S$ from the stack of groups to the stack of bands. A group morphism $\Phi\,:\,G 
\,\longrightarrow\,G'$ is a monomorphism if and only if the corresponding band morphism 
$$\bd(\Phi)\,:\,\bd(G)\,\longrightarrow\,
\bd(G')$$ is injective (indeed by definition a morphism of bands is injective if it is locally represented by a group monomorphism).
 
Moreover each gerbe $\varphi\,:\,\mathcal G \,\longrightarrow\, S$ admits a well 
defined $S$--band $\bd(\mathcal G)$, and there is a natural isomorphism 
$\varphi^*(\bd(\mathcal G))\,\simeq\, \bd(I_{\mathcal G})$. To check this, we recall 
that the association $\mathcal G \,\longmapsto \,\bd(\mathcal G)$ is characterized 
by three properties: it is functorial, compatible with localization, and $\bd(\B 
G)=\bd(G)$. But when we base change $\mathcal G \,\longrightarrow\, S$ by itself, 
it is easy to check that we get the neutral gerbe $\B I_{\mathcal G} 
\,\longrightarrow\, \mathcal G$.

According  to \cite[III 3.3.3]{rivano:cat_tan}, any object of $\Vect \mathcal G$ is a
subquotient of the image by $\phi^*$ of an object of $\Vect \mathcal G'$ if and only if
the morphism $$\bd(\phi)\,:\,\bd(\mathcal G)\,\longrightarrow\,
\bd(\mathcal G')$$ is injective. 

Since the structural morphism $\varphi\,:\,\mathcal G
\,\longrightarrow\,
S$ is a covering, this is equivalent to the
assertion that $\varphi^*\bd(\phi)\,:\,\varphi^*\bd(\mathcal G)\,\longrightarrow\,
\varphi^*\bd(\mathcal G')$ is injective; in other words, equivalent to the assertion that the natural morphism
$\bd(I_{\mathcal G})\,\longrightarrow\,
 \bd(\phi^* I_{\mathcal G'})$ is injective. This is in turn
equivalent to the fact that the morphism $I_{\mathcal G}\,\longrightarrow\,
 \phi^* I_{\mathcal G'}$ is a
monomorphism, and we conclude by Proposition \ref{char_rep}(\ref{char_rep_4}).
	\item Since this is similar to the proof of the first part, we omit
the details. Also, this is not used in the present article.
\end{enumerate}
This completes the proof.
\end{proof}

\section*{Acknowledgements}

We thank Angelo Vistoli for very useful advice and especially for pointing out 
\cite{alper:good_moduli} to us. Proposition \ref{unif_fund} and its proof are due 
to him. We thank the two referees for detailed comments that helped in improving 
the exposition. The first-named author wishes to thank Universit\'e Lille 1 for 
its hospitality.


\begin{thebibliography}{ZZZZZ}

\bibitem[AE12]{alper-easton:recasting}
J. D. Alper and R. W. Easton, Recasting results in equivariant geometry: affine cosets, observable subgroups and
existence of good quotients, \textit{Transform. Groups} \textbf{17} (2012), 1--20.

\bibitem[Alp08]{alper:good_moduli} J. D. Alper, Good moduli spaces for Artin stacks, \textit{Ann. Inst. Fourier}
{\bf 63} (2013), 2349--2402.

\bibitem[Alp10]{alper:adequate_moduli} J. D. Alper, Adequate moduli spaces and geometrically reductive
group schemes, {\it Algebr. Geom.} {\bf 1} (2014), 489--531.

\bibitem[AOV08]{aov:tame_stacks} D. Abramovich, M. Olsson and A. Vistoli, Tame stacks in positive
characteristic, {\it Ann. Inst. Fourier} {\bf 58} (2008), 1057--1091.

\bibitem[Ber14]{ber:functorial}
D. Bergh, Functorial destackification of tame stacks with abelian stabilisers, preprint, (2014), arXiv:1409.5713 . 

\bibitem[BN06]{beh-noo:unif} K. Behrend and B. Noohi, Uniformization of Deligne-Mumford curves,
{\it Jour. Reine Angew. Math.} {\bf 599} (2006), 111--153.

\bibitem[BV12]{bv:nori_gerbe} N. Borne and A. Vistoli, The Nori fundamental gerbe of a
fibered category, {\it Jour. Alg. Geom.} {\bf 24} (2015), 311--353.

\bibitem[Del90]{del:cat_tak} P.Deligne, Cat\'egories tannakiennes, {\it The Grothendieck Festschrift} {\bf Vol. II} (1990), 111--195.

\bibitem[DG70]{dg:gr_alg} M. Demazure and P. Gabriel, {\it Groupes alg\'ebriques. Tome I: 
G\'eom\'etrie alg\'ebrique, g\'en\'eralit\'es, groupes commutatifs}, Masson \& 
Cie, \'Editeur, Paris; North-Holland Publishing Co., Amsterdam, 1970.

\bibitem[Jos03]{joshua_RR} R. Joshua, Riemann-Roch for algebraic stacks. I, {\it Compositio Math.} {\bf 136} (2003), no. 2, 117--169. 


\bibitem[Gro65]{EGAIV2} A. Grothendieck, \'{E}l\'ements de g\'eom\'etrie alg\'ebrique. {IV}. \'{E}tude
locale des sch\'emas et des morphismes de sch\'emas. {II}, \textit{Inst. Hautes \'Etudes Sci. Publ.
Math.} {\bf 24} (1965).

\bibitem[Mil07]{milne_quot} J. S. Milne, Quotients of Tannakian categories,
{\it Theory Appl. Categ.} {\bf 18} (2007), 654--664. 

\bibitem[Noo04]{noohi:fund_group_alg_stack} B. Noohi, Fundamental groups of algebraic stacks,
{\it Jour. Inst. Math. Jussieu} {\bf 3} (2004), 69--103. 


\bibitem[Nor82]{nor_phd} M.Nori, The fundamental group-scheme,
{\it Proc. Indian Acad. Sci. Math. Sci.} {\bf 91} (1982), no. 2, 73--122. 


\bibitem[Saa72]{rivano:cat_tan} N. Saavedra Rivano, {\it Cat\'egories Tannakiennes}, Lecture Notes in
Mathematics, Vol. 265, Springer-Verlag, Berlin-New York, 1972. 

\bibitem[Stacks]{stacks-project} The Stacks Project Authors, {\it Stacks Project},
http://stacks.math.columbia.edu.

\end{thebibliography}
\end{document}